\documentclass[11 pt]{amsart}


\usepackage{pdfsync}
\usepackage{amscd}
\usepackage{amsmath}
\usepackage{amssymb}
\usepackage{amsthm}
\usepackage{epsf}
\usepackage{latexsym}
\usepackage{verbatim}
\usepackage[utf8]{inputenc}
\usepackage{pdfsync}
\input epsf.tex

\usepackage{enumerate}






\newtheorem{theorem}{Theorem}[section]
\newtheorem{lemma}[theorem]{Lemma}
\newtheorem{corollary}[theorem]{Corollary}






\begin{document}






\title[On Rationally Ergodic and Rationally Weakly Mixing]{On Rationally Ergodic and Rationally Weakly Mixing Rank-One Transformations}

\author[Dai]{Irving Dai}
\address[Irving Dai]{Harvard College\\ University Hall \\
Cambridge, MA 02138, USA}
\email{ifdai@college.harvard.edu}

\author[Garcia]{Xavier Garcia}
\address[Xavier Garcia]{University of Minnesota \\
Minneapolis, MN 55455-0213, USA}
\email{garci363@umn.edu}

\author[P\u{a}durariu]{Tudor P\u{a}durariu}
\address[Tudor P\u{a}durariu]{University of California, Los Angeles, CA 90095-1555, US }
\email {tudor\_pad@yahoo.com}

\author[Silva]{Cesar E. Silva}
\address[Cesar E. Silva]{Department of Mathematics\\
     Williams College \\ Williamstown, MA 01267, USA}
\email{csilva@williams.edu}

\subjclass[2000]{Primary 37A40; Secondary
37A05} 
\keywords{Infinite measure-preserving, ergodic, rationally ergodic, rank-one}

\begin{abstract}  
We study the notions of  weak rational ergodicity and rational weak mixing as defined by Jon Aaronson. We prove that various families of infinite measure-preserving rank-one transformations possess (or do not posses) these properties, and consider their relation to other notions of mixing in infinite measure. 
\end{abstract}

\maketitle

{\allowdisplaybreaks

\section{Definitions and Preliminaries}

\noindent
Let $(X,\mathcal{B},\mu)$ be a standard Borel  measure space with a $\sigma$-finite nonatomic  measure $\mu$. In most cases, we will assume that $\mu$ is infinite. A transformation $T: X \rightarrow X$ is {\bf measurable} if $T^{-1}A\in\mathcal B$ for all $A\in\mathcal B$. A measurable transformation $T$ is 
{\bf measure-preserving} if  $\mu(A)=\mu(T^{-1}A)$ for all $A\in\mathcal B$. 
We say that $T$ is {\bf ergodic} if every $T$-invariant set (i.e, $T^{-1}A=A $ mod $\mu$) is null ($\mu(A)=0$) or full $(\mu(X\setminus A)=0$). We say that $T$ is {\bf conservative} if for every measurable set $A$ of positive measure, there exists a positive integer  $n$ such that $\mu(A \cap T^{-n}A)>0$. 
 It follows that $T$ is conservative and ergodic if and only if for every set $A$ of positive measure, 
  $\bigcup_{n=0}^\infty T^{-n}A=X$ mod $\mu$.
An {\bf invertible measurable  transformation} is a measurable transformation whose inverse is also measurable.  Throughout this paper, we will assume that $T$ is an invertible, conservative ergodic, measure-preserving transformation on  $(X,\mathcal{B},\mu)$, and we will typically use the forward images  $T^nA$ instead of $T^{-n}A$. \\
\\
When $T$ is  a measure-preserving transformation on   a probability space $X$, the Birkhoff ergodic theorem states   that  ergodicity is equivalent to having the convergence
\begin{align}\label{birk}
\dfrac{1}{n} \displaystyle \sum_{k = 0}^{n-1} \mu(A \cap T^k B) \rightarrow \mu(A)\mu(B)
\end{align}
\noindent
for all measurable $A, B \subset X$.
This gives a quantitative estimate for the average number of visits of one set to another.
 When $X$ has infinite measure, however, the Birkhoff ergodic theorem 
implies that the Cesaro averages of \eqref{birk} converge to $0$ for all pairs $A, B$ of finite measure.
Moreover, in \cite{Aa77} Aaronson proved that there exists  no sequence of normalizing constants for which the averages of
\eqref{birk} converge to $\mu(A)\mu(B)$, and he proposed in turn the definitions of rational ergodicity and weak rational ergodicity. \\

\noindent
For any measurable set $F \subset X$ of finite positive measure, define the $\mathbf{intrinsic}$ $\mathbf{weight}$ $\mathbf{sequence}$ of $F$ to be

\[
u_n(F) = \dfrac{\mu(F \cap T^nF)}{\mu(F)^2}
\]
\noindent
and write 

\[
a_n(F) = \displaystyle \sum_{k = 0}^{n-1} u_k(F).
\]

\noindent
A transformation $T$ is said to be {\bf weakly rationally ergodic} (see \cite{Aa77}) if there exists a measurable set $F \subset X$ of positive finite measure such that for all measurable $A$, $B$ $\subset F$, we have

\begin{equation}\label{e:raterg}
\dfrac{1}{a_n(F)} \displaystyle \sum_{k = 0}^{n-1} \mu(A \cap T^kB) \rightarrow \mu(A)\mu(B)
\end{equation}

\noindent
as $n \rightarrow \infty$. If this convergence happens only along a subsequence $\{n_i\}$ of $\mathbb{N}$, we say that $T$ is $\mathbf{subsequence}$ $\mathbf{weakly}$ $\mathbf{rationally}$ $\mathbf{ergodic}$. To emphasize the set $F$, we will sometimes say $T$ is $\mathbf{weakly}$ $\mathbf{rationally}$ $\mathbf{ergodic}$ $\mathbf{on \ F}$. Note that any measure-preserving ergodic transformation on a probability space is trivially weakly rationally ergodic, by taking $F$ to be the whole space itself. Then $a_n(F) = n$, so \eqref{e:raterg}  reduces to the Cesaro sum definition of ergodicity. \\

\noindent
A transformation  $T$ is said to be $\mathbf{(spectrally)}$ $\mathbf{weakly}$ $\mathbf{mixing}$ if whenever $f\in L^\infty(X,\mu)$ and $f\circ T= z f $  for some $z\in \mathbb{C}$, then $f$ is constant a.e. When  $X$ is a probability space, this is equivalent to ergodicity of the Cartesian square and also  to  the strong Cesaro convergence
\[
\dfrac{1}{n} \displaystyle \sum_{k = 0}^{n-1} |\mu(A \cap T^k B) - \mu(A)\mu(B)| \rightarrow 0
\]
for all measurable $A, B \subset X$.  In \cite{AaLiWe79}, it was shown that for infinite measure-preserving transformations, (spectral) weak mixing 
is strictly weaker than ergodicity of the Cartesian square. \\
\\
\noindent
Another property we consider that is equivalent to weak mixing in the finite measure-preserving case is 
double ergodicity.  This property was introduced by Furstenberg in \cite{Fu81} and was shown to be 
equivalent to weak mixing for probability-preserving transformations, but was not given a specific name. A  transformation $T$ is said to be {\bf doubly ergodic} if for every pair of sets $A$ and $B$ with positive measure, there exists a positive integer  $n$ for which $\mu(A \cap T^nA)$ and $\mu(B \cap T^nA)$ are simultaneously nonzero. In the infinite measure-preserving case, double ergodicity is strictly stronger than spectral weak mixing and is properly implied by ergodic Cartesian square \cite{BoFiMaSi01}. \\
\\
\noindent
More recently, Aaronson introduced another notion of weak mixing for infinite measure that generalizes
rational ergodicity.
A transformation $T$ is said be  {\bf rationally weakly mixing} (see \cite{Aa12}) if there exists a measurable set $F \subset X$ of positive finite measure such that for all measurable $A$, $B$ $\subset F$, we have
\begin{equation}\label{e:rwm}
\dfrac{1}{a_n(F)} \displaystyle \sum_{k = 0}^{n-1} |\mu(A \cap T^kB) - \mu(A)\mu(B)u_k(F)| \rightarrow 0
\end{equation}

\noindent
as $n \rightarrow \infty$. Again, it is clear that rational weak mixing reduces to the usual definition of weak mixing in the finite measure-preserving case. \\
\\
\noindent
We now describe our main results. In Section~\ref{S:ratergodicity} we prove that a large class
of rank-one transformations are weakly rationally ergodic and discuss the notions of rational
ergodicity and bounded rational ergodicity in this context. In Section~\ref{S:ratweakmix}  
we construct  a class of rank-one transformations that are not rationally weakly mixing; in particular, we obtain a transformation which is rationally ergodic and spectrally weakly mixing but not 
rationally weakly mixing. This negatively answers a question of Aaronson's. (After this work was completed, we learned that Aaronson had also independently answered this question \cite{Aa12b}.)
Section~\ref{S:doubleerg} shows that rational weak mixing implies double ergodicity and 
constructs a transformation that is not rationally weakly mixing and which we conjecture to be doubly ergodic.
Section~\ref{S:zerotype} proves that the notion of zero-type for infinite measure-preserving transformations (whose spectral definition is similar to the  mixing  condition in the case of 
probability-preserving transformations) is independent of rational weak mixing. 
Finally, in Section~\ref{S:ratweakmixex} we present a class of rank-one transformations that are 
rationally weakly mixing. As remarked in \cite{Aa12}, all the examples of rationally weakly mixing transformations
constructed in \cite{Aa12} are of the type $T\times S$, where $T$ is an infinite measure-preserving 
$K$-automorphism and $S$ is a mildly mixing probability-preserving transformation. These examples
have countable Lebesgue spectrum and are of a different nature than our rank-one constructions.

\subsection{Acknowledgements}
This paper was based on research done by the Ergodic Theory group of the 2012 SMALL Undergraduate Research Project at Williams College. Support for this project was provided by the National Science Foundation REU Grant DMS - 0353634 and the Bronfman Science Center of Williams College.  We are indebted to Jon Aaronson for conversations and suggestions during discussions of our work at the 2012 Williams Ergodic Theory Conference. We would also like to acknowledge the other members of the 2012 Ergodic Theory group: Shelby Heinecke, Emily Wickstrom, and Evangelie Zachos. We would like to thank the referee for comments that improved the paper.

\subsection{Rank-One Transformations (Basics)}

We briefly review (rank-one) cutting-and-stacking transformations (see e.g. \cite{Si08}). A $\mathbf{column}$ is an ordered collection of pairwise disjoint intervals (called $\mathbf{levels}$) in $\mathbb{R}$, each of the same measure. We think of the levels in a column as being stacked on top of each other, so that the $(j+1)$-st level is directly above the $j$-th level. Every column $C = \{J_j\}$ is associated with a natural column map $T_C$ sending each point in $J_j$ to the point directly above it in $J_{j+1}$. (Note that $T_C$ is undefined on the top level of $C$.) A $(\mathbf{rank}$-$\mathbf{one})$ $\mathbf{cutting}$-$\mathbf{and}$-$\mathbf{stacking}$ construction for $T$ consists of a sequence of columns $C_n$ such that:

\begin{enumerate}[(a)]
\item The first column $C_0$ is the unit interval.
\item Each column $C_{n+1}$ is obtained from $C_n$ by cutting $C_n$ into $r_n\geq 2$ subcolumns of equal width, adding any number of new levels (called $\mathbf{spacers}$) above each subcolumn, and stacking every subcolumn under the subcolumn to its right. In this way, $C_{n+1}$ consists of $r_n$ copies of $C_n$, possibly separated by spacers.
\item The collection of levels $\displaystyle \bigcup_n C_n$ forms a generating semiring for $\mathcal{B}$.
\end{enumerate}

\noindent
Observing that $T_{C_{n+1}}$ agrees with $T_{C_n}$ everywhere where $T_{C_n}$ is defined, we then take $T$ to be the limit of $T_{C_n}$ as $n \rightarrow \infty$. 

\subsection{Rank-One Transformations (Notation)}

Let $T$ be a rank-one transformation, and fix any column $C_n$ of $T$. We denote the number of levels in $C_n$ by $h_n$ and write $w_n$ for the width of each level. We denote the height of any level $J$ in $C_n$ by $h(J)$, with the convention that $0 \leq h(J) < h_n$. For each $0 \leq k < r_n$, let $s_{n,k}$ be the number of spacers added above the $k$-th subcolumn of $C_n$, and denote the number of levels in the $k$-th subcolumn (after adding spacers) by $h_{n, k} = h_n + s_{n, k}$. \\
\\
Define $T$ to be $\mathbf{normal}$ if $s_{n, r_n-1} > 0$ for infinitely many values of $n$. (This means that at least one spacer is added above the rightmost subcolumn infinitely many times.) In addition, we say that $T$ has a $\mathbf{bounded \ number \ of}$ $\mathbf{cuts}$ if $\sup \{r_n\} < \infty$; this implies that $T$ is partially rigid and of infinite conservative index \cite{AdFrSi97}. \\
\\
Given any level $J$ from $C_n$ and any column $C_m$ of $T$ with $m \geq n$, we define the $\mathbf{descendants}$ of $J$ in $C_m$ to be the collection of levels in $C_m$ whose disjoint union is $J$. We denote this set by $D(J, m)$. Occasionally, we will also use $D(J, m)$ to refer to the heights of the descendants of $J$ in $C_m$. In the case when $J$ is the unit interval $I$, for each $m \in \mathbb{N}$ it will be convenient to define $M_m = \max(D(I, m))$. (That is, $M_m$ is the height of the uppermost descendant of $I$ in $C_m$.)  \\
\\
We say that $T$ $\mathbf{grows \ exponentially}$ if $2s_{n,r_n-1} \geq h_{n+1}$ for every $n$. Intuitively, this means that the upper half of every column $C_n$ consists of spacers added during the $(n-1)$-st stage of construction. In particular, the descendants of any level $J$ from an earlier column must lie in the lower half of $C_n$. Note that any $T$ which grows exponentially is clearly normal. 


\section{Rational Ergodicity}\label{S:ratergodicity}
\noindent
In this section, we establish some introductory ideas and prove that a large class of rank-one transformations are rationally ergodic. \\
\\
We begin with a computational lemma. Suppose that $T$ is a normal rank-one transformation. Then we claim that the partial sums $a_n(J)$ for any level $J$ can be computed from the descendant heights $D(J,N)$ for $N$ sufficiently large. More precisely,

\begin{lemma}\label{L:lem2.1}
Let $T$ be a normal rank-one transformation. Fix any level $J$ and $n \in \mathbb{N}$. Then for every $N$ sufficiently large, we have
\[
\mu(J \cap T^kJ) = w_N\cdot  |D(J, N) \cap (k + D(J, N))|
\]
for all $0 \leq k < n$. Consequently,
\[
\displaystyle \sum_{k = 0}^{n-1}  \mu(J \cap T^kJ) = w_N\cdot \sum_{k = 0}^{n-1}  |D(J, N) \cap (k + D(J, N))|.
\]
\end{lemma} 
\begin{proof}
Fix any level $J$, and let $n \in \mathbb{N}$ be arbitrary. Since $T$ is normal, we can find some column $C_N$ in which all the heights $D(J, N)$ are at most $h_N - n$. For any $0 \leq k < n$ and level $J_i \in D(J, N)$, the image $T^k(J_i)$ is then the level in $C_N$ of height $h(J_i) + k$. The conclusion follows immediately.
\end{proof}

\noindent
We will sometimes need to compute $\mu(J \cap T^kJ)$ for $k < 0$. For this, simply observe that 
\[
\mu(J \cap T^kJ) = \mu(T^{-k}J \cap J)
\] 
and 
\[
|D(J, N) \cap (k + D(J, N))| = |(-k + D(J, N)) \cap D(J, N)|,
\]
so in fact Lemma~\ref{L:lem2.1} holds for all $-n < k < n$. \\
\\
We thus calculate $D(J, N)$. Suppose that $J$ is a level in $C_j$ of height $h(J)$. Then $J$ splits into $r_j$ levels in $C_{j+1}$ of heights 
\[
\{h(J)\} \cup \{h(J) + \displaystyle \sum_{k = 0}^i h_{j, k}: 0 \leq i < r_j - 1\}.
\]
Letting
\[
H_j = \{0\} \cup \left\{\sum_{k = 0}^i h_{j, k} : 0 \leq i < r_j - 1\right\},
\]
it follows inductively that 
\[
D(J, N) = h(J) + H_j \oplus H_{j+1} \oplus \cdots \oplus H_{N-1}.
\]

\noindent
\\
We now show that every normal rank-one transformation satisfies condition \eqref{e:raterg} for $A$, $B$ finite unions of levels and $F$ the unit interval. In this context, we note that Aaronson \cite[Theorem 6.1]{Aa77} has shown every set of finite measure $F$  contains a dense algebra of sets satisfying \eqref{e:raterg}, but at the same time it is never true that \eqref{e:raterg} is satisfied for all measurable sets
in every set $F$ of finite positive measure \cite[Theorem 6.2]{Aa77}.


\begin{theorem}\label{T:normal}
Let $T$ be a normal rank-one transformation. Then $T$ satisfies condition \eqref{e:raterg} for $A$, $B$ finite unions of levels and $F$ the unit interval.
\end{theorem}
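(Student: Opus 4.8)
The plan is to reduce to single levels and then exploit a self-averaging identity that relates the partial correlation sums of the descendants of $I$ to those of $I$ itself. Both sides of \eqref{e:raterg} are bilinear in the pair $(A,B)$: the numerator because $\mu\big((\bigsqcup_i A_i)\cap T^k(\bigsqcup_j B_j)\big)=\sum_{i,j}\mu(A_i\cap T^kB_j)$, and the target $\mu(A)\mu(B)$ because the product is bilinear. Since the levels generate $\mathcal B$ and any level contained in $I$ is a descendant of $I$, I can refine $A$ and $B$ to disjoint unions of descendants of $I$ in a common column $C_N$; it then suffices to treat the case $A=J_a$, $B=J_b$, two single descendants of $I$ in $C_N$. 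Writing $\Sigma_n(P,Q)=\sum_{k=0}^{n-1}\mu(P\cap T^kQ)$ and recalling that $\mu(I)=1$ so that $a_n(I)=\Sigma_n(I,I)$, the goal becomes $\Sigma_n(J_a,J_b)/a_n(I)\to\mu(J_a)\mu(J_b)=w_N^2$. Because $I=\bigsqcup_{J_c} J_c$ over its descendants in $C_N$, bilinearity yields the identity $a_n(I)=\sum_{J_c,J_d}\Sigma_n(J_c,J_d)$, a sum of $|D(I,N)|^2=w_N^{-2}$ terms. Hence it is enough to prove that all these partial sums are asymptotically equal, i.e.\ $\Sigma_n(J_c,J_d)/\Sigma_n(J_a,J_b)\to 1$ for every pair of descendants: dividing the identity by $\Sigma_n(J_a,J_b)$ then forces $a_n(I)/\Sigma_n(J_a,J_b)\to w_N^{-2}$, which is exactly the claim.

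For the key comparison, fix $n$ and, as in Lemma~\ref{L:lem2.1}, choose a column $C_{N'}$ (with $N'\ge N$) deep enough that all descendants of $I$ lie at heights $\le h_{N'}-n$; normality guarantees such an $N'$ exists for every $n$. The two-level version of Lemma~\ref{L:lem2.1}, proved identically, gives $\Sigma_n(J_c,J_d)=w_{N'}\,\big|\{(s,s')\in S\times S: s-s'\in[c_{cd},c_{cd}+n)\}\big|$, where $S=H_N\oplus\cdots\oplus H_{N'-1}$ is the common shift set of all descendants of $I$ in $C_N$ and $c_{cd}=h(J_d)-h(J_c)$. Two such windowed counts differ only on a symmetric difference of at most $2|c_{cd}-c_{ab}|$ lags, and each lag contributes at most $|S|$ (being the size of an intersection of $S$ with a translate of $S$). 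Since $|c_{cd}-c_{ab}|\le 2M_N$ and $w_{N'}|S|=w_N$, this produces $|\Sigma_n(J_c,J_d)-\Sigma_n(J_a,J_b)|\le 4M_N\,w_N$, a bound \emph{independent of $n$}.

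Finally, $\Sigma_n(J_a,J_a)=w_N^2\,a_n(J_a)$ by the definition of $a_n$, and $a_n(J_a)\to\infty$ because $T$ is conservative, so $\sum_k\mu(J_a\cap T^kJ_a)=\infty$. Thus every $\Sigma_n(J_c,J_d)\to\infty$ while their pairwise differences remain bounded, which forces all the ratios to $1$; unwinding the reduction then gives \eqref{e:raterg} for $A,B$ finite unions of levels. I expect the second paragraph to be the main obstacle, since everything hinges on the \emph{$n$-independent} bound for the differences of the correlation sums. That bound is available precisely because $J_a,J_b$ are descendants of the fixed column $C_N$, so the relevant lag-shifts are bounded by $M_N$ while the correlation windows lengthen with $n$; the uniform estimate $|S\cap(\text{shift}+S)|\le|S|$ together with $w_{N'}|S|=w_N$ is what converts a bounded shift into a bounded additive error.
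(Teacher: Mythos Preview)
Your argument is correct. The core mechanism---bounded lag shifts produce bounded additive errors in the correlation sums, while the sums themselves diverge---is the same one the paper exploits, but you organize it differently. The paper fixes a single level $J$ (the base of some $C_j$), factors $D(I,N)=A\oplus B$ with $B=D(J,N)$, and sandwiches $|A|^2\sum_k|B\cap(k+B)|$ between $\sum_{k=M}^{n-1-M}$ and $\sum_{k=-M}^{n-1+M}$ of $|D(I,N)\cap(k+D(I,N))|$ via a quadruplet count; dividing by $a_n(I)$, both bounds tend to $1$ because the $2M$ boundary terms are bounded while $a_n(I)\to\infty$. It then extends to pairs $J,J'$ by a further lag shift and to unions by summing. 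You instead fix the column $C_N$ from the outset, use the bilinear identity $a_n(I)=\sum_{c,d}\Sigma_n(J_c,J_d)$, and show all $|D(I,N)|^2$ summands are within $4M_Nw_N$ of one another; since at least one (the diagonal one) diverges, all ratios tend to $1$ and the averaging forces $\Sigma_n(J_a,J_b)/a_n(I)\to w_N^2$. Your route is arguably cleaner: it handles off-diagonal pairs $(J_a,J_b)$ directly rather than via a separate shift step, and the $n$-independent bound $4M_Nw_N$ is explicit. The paper's route makes the comparison with $a_n(I)$ more visibly two-sided, which is perhaps more transparent about why the limit is exactly $1$ rather than merely bounded.
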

\begin{proof}
Let $F = I$ denote the unit interval. We begin by proving \eqref{e:raterg} for $A = B = J$, where $J$ is the bottom level of any column $C_j$. We need to show that
\[
\dfrac{1}{a_n(I)} \displaystyle \sum_{k = 0}^{n-1} \mu(J \cap T^{k}J) \rightarrow \mu(J)^2
\]
as $n \rightarrow \infty$. For $N$ sufficiently large (as a function of $n$), we have
\[
\displaystyle \sum_{k = 0}^{n-1} \mu(J \cap T^{k}J) = w_N \left( \sum\limits_{k = 0}^{n-1}|D(J,N)\cap (k + D(J,N))| \right)
\]
by Lemma~\ref{L:lem2.1}. Now, writing
\[
D(I, N) = H_0 \oplus H_1 \oplus \cdots \oplus H_{N-1}
\]
and
\[
D(J, N) = H_j \oplus H_{j+1} \oplus \cdots \oplus H_{N-1},
\]
we may express $D(I, N) = A \oplus B$ and $D(J, N) = B$ with $A = H_0 \oplus H_1 \oplus \cdots \oplus H_{j-1}$. Noting that $\mu(J) = 1/|D(I, j)| = 1/|A|$, we thus wish to show
\[
\displaystyle \dfrac{w_N}{a_n(I)} \left(|A|^2\sum\limits_{k = 0}^{n-1}|B \cap (k + B)|\right) \rightarrow 1.
\]
\noindent
We give the term inside the parentheses a combinatorial interpretation. Let $P(n)$ denote the number of ordered quadruplets $(a, a', b, b')$ with $a, a' \in A$ and $b, b' \in B$ for which $0 \leq b - b' < n$. Then the above quotient is precisely $w_NP(n) / a_n(I)$, since $|B \cap (k + B)|$ counts the number of pairs $b, b' \in B$ with $k = b - b'$. \\
\\
Now let $M$ be the maximum value of $A - A$. We claim that the following inequality holds:
\[
\displaystyle \sum_{k = M}^{n - 1 - M} |(A \oplus B) \cap (k + A \oplus B)| \leq P(n) \leq  \sum_{k = -M}^{n - 1 + M} |(A \oplus B) \cap (k + A \oplus B)|.
\]
\noindent
Indeed, the sum on the left counts the number of quadruplets $(a, a', b, b')$ with $M \leq a - a' + b - b' < n - M$; the sum on the right counts the number of quadruplets with $-M \leq a - a' + b - b' < n + M$. Clearly, any quadruplet with $M \leq a - a' + b - b' < n - M$ has $0 \leq b - b' < n$. Similarly, any quadruplet with $0 \leq b - b' < n$ has $-M \leq a - a' + b - b' < n + M$. Recalling that $A \oplus B = D(I, N)$, it thus follows that
\[
\dfrac{1}{a_n(I)} \sum_{k = M}^{n - 1 - M} \mu(I \cap T^kI) \leq \dfrac{w_N}{a_n(I)} (P(n)) \leq \dfrac{1}{a_n(I)} \sum_{k = -M}^{n - 1 + M} \mu(I \cap T^kI).
\]
\noindent
Now, $M$ is a fixed constant, independent of $n$. Furthermore, the sequence $\mu(I \cap T^k(I))$ is bounded above by 1 but has divergent sum. Hence both sides of the above inequality tend to 1 as $n \rightarrow \infty$, showing that $w_NP(n) / a_n(I) \rightarrow 1$, as desired. This proves \eqref{e:raterg} for $A = B = J$ , where $J$ is the bottom level of any column. \\
\\
We now prove \eqref{e:raterg} for $J$ and $J'$ any two levels in the same column. By applying $T^{-1}$ and using the fact that $T$ is measure-preservin, we may assume that one of the two levels (say $J$) is actually the bottom level of the column. Letting $J' = T^d(J)$ for some $d$, we wish to show
\[
\dfrac{1}{a_n(I)} \sum_{k = 0}^{n-1} \mu(J \cap T^{k + d}J) \rightarrow \mu(J)^2.
\]
Now, we have from before that
\[
\dfrac{1}{a_n(I)} \sum_{k = 0}^{n-1} \mu(J \cap T^{k}J) \rightarrow \mu(J)^2.
\]
Since $\mu(J \cap T^{k}J)$ is bounded and $a_n(I) \rightarrow \infty$, the conclusion follows immediately. \\
\\
Finally, we extend to finite unions of levels. Without loss of generality, we may assume that $J$ and $J'$ are both disjoint unions of images of the same level $K$. The desired statement then follows from summing together the limits \eqref{e:raterg} for each pair of images.
\end{proof}


\noindent
We now show that under certain conditions, we can extend the results of Theorem~\ref{T:normal} to all sets $A$ and $B$ (thus proving weak rational ergodicity).

\begin{theorem}\label{T:wraterg}
Let $T$ be an exponentially growing rank-one transformation with a bounded number of cuts. Then $T$ is weakly rationally ergodic.
\end{theorem}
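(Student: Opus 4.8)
The plan is to prove that $T$ is \emph{rationally ergodic} with respect to $F=I$, i.e.\ that a uniform Rényi inequality holds, and then to feed this into the abstract fact that rational ergodicity upgrades the convergence \eqref{e:raterg} from a dense algebra to all measurable subsets of $I$. Since $T$ grows exponentially it is normal, so Theorem~\ref{T:normal} already supplies \eqref{e:raterg} whenever $A,B$ are finite unions of levels; this is the dense algebra on which the correct limit $\mu(A)\mu(B)$ is known, and it is exactly where the extension argument will begin.

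The heart of the proof is to produce a constant $M<\infty$, independent of $n$, with
\[
\int_I \Big(\sum_{k=0}^{n-1}1_I\circ T^k\Big)^2 d\mu \;\le\; M\,a_n(I)^2
\]
for all $n$ (recall $\mu(I)=1$, so $\int_I\sum_{k=0}^{n-1}1_I\circ T^k\,d\mu=a_n(I)$). Fix $n$ and choose $N$ large as in Lemma~\ref{L:lem2.1}, write $D=D(I,N)$, and set $f(d)=|D\cap[d,d+n)|$ for $d\in D$. Since $\sum_{k=0}^{n-1}1_I\circ T^k$ is constant and equal to $f(d)$ on the descendant of height $d$, while each descendant has width $w_N=1/|D|$, the displayed inequality is equivalent to the purely combinatorial bound
\[
|D|\sum_{d\in D}f(d)^2 \;\le\; M\Big(\sum_{d\in D}f(d)\Big)^2 .
\]
Here $\sum_d f(d)$ counts pairs $(d,d')\in D^2$ with $0\le d'-d<n$ and $\sum_d f(d)^2$ counts triples, so the claim is that the triple correlations of the descendant set are controlled by the square of its pair correlations, uniformly in both $n$ and $N$.

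To establish this I would exploit the two hypotheses directly. Exponential growth forces $h_{j+1}\ge 2h_j$ and, more usefully, makes the digit blocks in $D=H_0\oplus\cdots\oplus H_{N-1}$ strongly separated: the partial block $H_0\oplus\cdots\oplus H_j$ stays below $h_{j+1}/2$ (this is the ``lower half'' estimate $M_{j+1}<h_{j+1}/2$), while the smallest positive element contributed by $H_{j+1}$ is at least $h_{j+1}$. Thus $D$ is a lacunary digit set whose correlation counts are self-similar across scales, and for a window of length $n$ only the blocks up to the scale with $h_j\approx n$ interact nontrivially. The bounded-cuts hypothesis $\sup_n r_n<\infty$ then caps the number of elements each block contributes, which is what keeps the scale-by-scale comparison of pair and triple counts uniform in $n$ and $N$ and yields a single constant $M$. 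I expect this combinatorial estimate to be the main obstacle: the convergence on levels and the abstract extension are either already proved or standard, whereas pinning down a scale-independent $M$ requires genuinely using how exponential growth and bounded cuts interact across all scales.

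With the Rényi inequality in hand I would invoke Aaronson's theorem that a rationally ergodic transformation is weakly rationally ergodic on the same set \cite{Aa77}. Concretely, the bound furnishes the key uniform estimate
\[
\frac{1}{a_n(I)}\sum_{k=0}^{n-1}\mu(E\cap T^kE)\le\sqrt{M\,\mu(E)}\quad\text{for every }n,
\]
obtained by bounding $\sum_{k<n}1_E\circ T^k$ pointwise by $\sum_{k<n}1_I\circ T^k$, applying the Cauchy--Schwarz inequality on $I$, using $E\subseteq I$, and invoking the Rényi inequality. Since the right-hand side tends to $0$ with $\mu(E)$ uniformly in $n$, approximating an arbitrary measurable $A\subseteq I$ by a finite union of levels $\tilde A$ (on which Theorem~\ref{T:normal} supplies the limit) and controlling the resulting error through this estimate extends \eqref{e:raterg} to all measurable $A,B\subseteq I$, which is precisely weak rational ergodicity.
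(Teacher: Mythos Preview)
Your strategy—prove a Rényi inequality on $I$ and invoke Aaronson's implication from rational ergodicity to weak rational ergodicity—is sound, and your reduction of the Rényi inequality to the combinatorial statement $|D|\sum_d f(d)^2\le M\bigl(\sum_d f(d)\bigr)^2$ is correct. The gap is that this combinatorial estimate is asserted rather than proved: you name the ingredients (lacunarity from exponential growth, bounded block sizes from bounded cuts) and say the scale-by-scale comparison ``yields a single constant $M$,'' but no argument is actually given. Since you yourself flag this as ``the main obstacle,'' what you have is a plan rather than a proof; the Cauchy--Schwarz extension at the end is fine, but it rests entirely on the missing step.

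The paper takes a different route and in fact proves more. Rather than aiming for the $L^2$ Rényi bound, it establishes the $L^\infty$ estimate
\[
\frac{1}{a_n(I)}\sum_{k=0}^{n-1}\mu(I\cap T^kB_1)\le c\,\mu(B_1)\quad\text{for every }B_1\subset I,
\]
uniformly in $n$; this is exactly bounded rational ergodicity on $I$. The mechanism is a clean covering identity: because $T$ grows exponentially, the images $T^kI$ with $-M_m\le k\le M_m$ cover each point of $I$ \emph{exactly} $|D(I,m)|$ times, whence $\sum_{|k|\le M_m}\mu(T^kI\cap B_1)=|D(I,m)|\,\mu(B_1)$. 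Passing from column $m$ to column $m-1$ costs only the factor $|D(I,m)|/|D(I,m-1)|=r_{m-1}$, and bounded cuts controls precisely this. No triple-correlation estimate is needed, the conclusion is stronger (bounded rational ergodicity $\Rightarrow$ rational ergodicity $\Rightarrow$ weak rational ergodicity), and the extension to arbitrary measurable $A,B$ follows by a linear approximation rather than Cauchy--Schwarz. If you want to salvage your approach, the quickest fix is to note that this covering identity actually gives the pointwise bound $S_n(1_I)\le c\,a_n(I)$ on $I$, which immediately implies your Rényi inequality with $M=c$.
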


\begin{proof}
We show that for $T$ satisfying the above hypotheses, it suffices to prove \eqref{e:raterg} for finite unions of levels (as in Theorem~\ref{T:normal}). Indeed, given arbitrary measurable sets $A, B \subset I$, choose $D \subset I$ a finite union of levels for which $\mu(D \triangle B) < \varepsilon$. We claim that there is some constant $c$ such that
\begin{equation}
\left| \dfrac{1}{a_n(I)} \sum\limits_{k = 0}^{n-1}\mu(A\cap T^kB) - \dfrac{1}{a_n(I)}\sum_{k = 0}^{n-1}\mu(A\cap T^kD)\right| \leq c\varepsilon
\end{equation}
\\
\noindent
for every $n$. Indeed, let $B_0 = B \cap D$, and write $B = B_0 \cup B_1$ and $D = B_0 \cup D_1$. Then the above difference reduces to
\[
\left| \dfrac{1}{a_n(I)} \sum\limits_{k = 0}^{n-1}\mu(A\cap T^kB_1) - \dfrac{1}{a_n(I)}\sum_{k = 0}^{n-1}\mu(A\cap T^kD_1)\right|.
\]
\noindent
Now, we claim that we can bound
\begin{equation}
\displaystyle \dfrac{1}{a_n(I)} \sum\limits_{k = 0}^{n-1}\mu(A\cap T^kB_1) \leq c \mu(B_1)
\end{equation}
for some $c$ independent of $n, A,$ and $B_1$. Applying this bound with $D_1$ in place of $B_1$ will bound (4) by $c(\mu(B_1)+\mu(D_1)) \leq 2c\varepsilon$, as desired. \\
\\
Recall that
 $M_m = \max(D(I, m))$, for $m\in\mathbb{N}$.  Clearly, $\{M_m\}$ is an increasing sequence. For any fixed $n$, if we choose $m$ such that $M_{m-1} \leq n-1 < M_m$, we have
\[
\displaystyle \sum_{k = 0}^{n-1} \mu(A \cap T^kB_1) \leq \sum_{k = 0}^{n-1} \mu(I \cap T^kB_1) \leq \sum_{k = 0}^{M_m} \mu(I \cap T^kB_1)
\]
and
\[
\displaystyle \sum_{k = 0}^{M_{m-1}} \mu(I \cap T^kI) \leq \sum_{k = 0}^{n-1} \mu(I \cap T^kI) = a_n(I).
\]
To prove (5), it thus suffices to find some $c$ such that
\begin{equation}
\sum_{k = 0}^{M_m} \mu(I \cap T^kB_1) \leq c\mu(B_1) \sum_{k = 0}^{M_{m-1}} \mu(I \cap T^kI)
\end{equation}
for every $m$. \\
\\
Now observe that the sets $T^kI$ with $-M_m \leq k \leq M_m$ cover each point of $I$ exactly $|D(I, m)|$ times. Indeed, consider the column $C_m$ and fix any $x \in I$. Let $x$ be contained in $J$, where $J$ is some level from $D(I, m)$. For any level $J'$ in $D(I, m)$, we claim that there is exactly one value of $k$ between $-M_m$ and $M_m$ for which $T^kJ' \cap J \neq \emptyset$. Indeed, suppose $0 \leq k \leq M_m$ and $T^kJ' \cap J \neq \emptyset$. Any forward image $T^kJ'$ with $0 \leq k \leq M_m$ is just a translation upwards by $k$ levels, since $h_m \geq 2M_m$. (This is implied by our hypothesis that $T$ is exponentially growing.) Hence in this case $k$ must equal $h(J) - h(J')$. On the other hand, suppose $-M_m \leq k < 0$ and $T^kJ' \cap J \neq \emptyset$. Then $J' \cap T^{-k}J \neq \emptyset$, and exactly the same argument shows that $-k = h(J') - h(J)$ (i.e., $k = h(J) - h(J')$). The claim is then immediate. \\
\\
We thus have
\[
\sum_{k = -M_m}^{M_m} \mu(I \cap T^kB_1) = \sum_{k = -M_m}^{M_m} \mu(T^kI \cap B_1) = |D(I, m)| \mu(B_1)
\]
and
\[
\sum_{k = -M_{m-1}}^{M_{m-1}} \mu(I \cap T^kI) = |D(I, m-1)|.
\]
Hence
\[
\sum_{k = -M_m}^{M_m} \mu(I \cap T^kB_1) = \left(\dfrac{|D(I,m)|}{|D(I,m-1)|}\right)\mu(B_1)\left(\sum_{k = -M_{m-1}}^{M_{m-1}}\mu(I \cap T^kI)\right)
\]
and so
\[
\sum_{k = 0}^{M_m} \mu(I \cap T^kB_1) \leq \left(\dfrac{|D(I,m)|}{|D(I,m-1)|}\right)\mu(B_1)\left(2\left(\sum_{k = 0}^{M_{m-1}}\mu(I \cap T^kI)\right) - 1\right).
\]
\\
\noindent
But $|D(I,m)|/|D(I, m-1)| = r_{m-1}$, and $T$ has a bounded number of cuts. We thus easily obtain (6). Hence (4) holds, and we can approximate $B$ with $D$ a finite union of levels. Applying a similar argument to $A$ shows that it suffices to prove \eqref{e:raterg} for all $A$, $B$ finite unions of levels, which is the content of Theorem~\ref{T:normal}.
\end{proof}

\noindent
We now consider some alternate notions of rational ergodicity, also due to Aaronson  \cite{Aa77}. For any measurable function $f$, recall the notation
\[
S_n(f) = \displaystyle \sum_{k=0}^{n-1}  f \circ T^k.
\]
\noindent
We say that $T$ is $\mathbf{rationally \ ergodic}$  if there exists a set $F$ of positive finite measure which satisfies a $\mathbf{Renyi}$ $\mathbf{inequality}$; i.e., there is some constant $M$ such that
\begin{equation}\label{e:renyi}
\displaystyle \int_F (S_n(1_F))^2 dm \leq M \left( \int_F S_n(1_F) dm \right)^2 
\end{equation}
for every $n \in \mathbb{N}$. If this inequality holds only on a subset $\{n_i\} \subset \mathbb{N}$, we say that $T$ is $\mathbf{subsequence}$ $\mathbf{rationally}$ $\mathbf{ergodic}$. Some authors adopt this as the definition of rational ergodicity instead (see e.g. \cite{Ci11}). It was shown in \cite{Aa77} that rational ergodicity implies weak rational ergodicity.  It is not currently known whether these notions are equivalent. \\
\\
We say that $T$ is $\mathbf{boundedly \ rationally \ ergodic}$ (see \cite{Aa79}) if there exists a set $F$ of positive finite measure such that
\[
\displaystyle \sup_{n \geq 1} \left\| \dfrac{1}{a_n(F)} S_n(1_F) \right\|_{\infty} < \infty.
\]
In \cite{Aa79}, it was shown that bounded rational ergodicity is a strictly stronger property than rational ergodicity. It is not difficult to see that the proof of Theorem~\ref{T:wraterg} (in particular, the establishment of (5) for all $B_1$) yields bounded rational ergodicity for the transformations in question. Indeed, set $A = I$ in (5). Then there is a constant $c$ such that for any $n$ and $B_1 \subset I$, 
\[
\int_{B_1} \dfrac{1}{a_n(I)} S_n(1_I) dm = \dfrac{1}{a_n(I)} \sum_{k = 0}^{n-1} \mu(I \cap T^kB_1) \leq c \mu(B_1).
\]
This means that the average value of $S_n(1_I)/a_n(I)$ on $B_1$ is bounded above by $c$. Since this holds for every $B_1$, the essential supremum of $S_n(1_I)/a_n(I)$ must also be bounded above by $c$. Hence $T$ is boundedly rationally ergodic. \\
\\
Aaronson proved in \cite{Aa79} that every dyadic tower over the adding machine is boundedly rationally ergodic; Theorem~\ref{T:wraterg} extends this result to a larger class of transformations and uses a different approach. Some interesting examples of exponentially growing rank-one transformations with a bounded number of cuts include:

\begin{enumerate}[(a)]
\item Hajian-Kakutani skyscraper-type constructions \cite{HaKa70}:
\subitem $r_n = 2$, $\{s_{n, 0} = 0, s_{n, 1} \geq 2h_n\}.$

\noindent  (When $s_{n, 1} = 2h_n+1$ the transformation is spectral weakly mixing, see \cite{AdFrSi97}).
\item Chac\'on-like constructions: 
\subitem $r_n = 3$, $\{s_{n, 0} = 0, s_{n, 1} = 1, s_{n, 2} \geq 3h_n + 1\}$.

\noindent (When $s_{n, 2} = 3h_n + 1$ the transformation has infinite ergodic index, see \cite{AdFrSi97}, but is not power weakly mixing, see \cite{GHPSW03}.)
\end{enumerate}
\noindent
\\
We now prove a slightly different version of Theorem~\ref{T:wraterg} without the hypothesis of a bounded number of cuts but obtain the conclusion only a a subsequence.
\begin{theorem}\label{T:renyi}
Let $T$ be an exponentially growing rank-one transformation. Then $T$ is subsequence rationally ergodic on $F = I = (0, 1)$ along the sequence $\{n_m = M_m + 1\}$.
\end{theorem}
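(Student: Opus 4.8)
The plan is to establish the Renyi inequality \eqref{e:renyi} with $F = I$ along the subsequence $n_m = M_m + 1$. The key observation is that exponential growth lets me control the overlap counts combinatorially, just as in the proof of Theorem~\ref{T:wraterg}, but now I must handle the $L^2$ quantity $\int_I (S_{n_m}(1_I))^2\, dm$ directly rather than merely a first-moment bound. First I would recall from the proof of Theorem~\ref{T:wraterg} that, because $T$ grows exponentially, for $-M_m \leq k \leq M_m$ the image $T^k I$ meets a given level of $D(I,m)$ in exactly one translate, so the function $S_{n_m}(1_I) = \sum_{k=0}^{M_m} 1_I \circ T^k$ has a clean description on $I$: its value at a point lying in the level of height $h$ counts how many levels of $D(I,m)$ sit at height at most $h$ (equivalently, the number of $j \in D(I,m)$ with $j \leq h$). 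Thus $S_{n_m}(1_I)$ is, up to the uniform translation structure, a counting function of the descendant heights $D(I,m)$.

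Next I would compute both sides of \eqref{e:renyi} in terms of these descendant heights. For the right-hand side, the earlier computation already gives $\int_I S_{n_m}(1_I)\, dm = \sum_{k=0}^{M_m} \mu(I \cap T^k I)$, which by the covering argument equals $\tfrac{1}{2}(|D(I,m)| + \text{(diagonal term)})$ — more precisely it is comparable to $|D(I,m)| \cdot \mu(I) = |D(I,m)|\, w_m \cdot |D(I,m)|$ up to the lower-order diagonal correction, so $(\int_I S_{n_m}(1_I)\,dm)^2$ is of order $|D(I,m)|^2$. For the left-hand side, expanding the square gives
\[
\int_I (S_{n_m}(1_I))^2\, dm = \sum_{k=0}^{M_m} \sum_{l=0}^{M_m} \mu(I \cap T^k I \cap T^l I),
\]
and the triple-intersection term $\mu(I \cap T^k I \cap T^l I)$ again has a combinatorial reading in $C_m$: it counts triples of descendant heights in arithmetic-type relation, namely pairs $(j_1, j_2)$ of levels in $D(I,m)$ with prescribed height gaps matching $k$ and $l$. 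I would bound this sum by the number of such configurations, which is controlled by $|D(I,m)|^2$ since fixing two of the three heights determines the third within the single-translate regime guaranteed by exponential growth.

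The main obstacle will be showing that the ratio $\int_I (S_{n_m})^2 / (\int_I S_{n_m})^2$ stays bounded \emph{without} the bounded-cuts hypothesis. In Theorem~\ref{T:wraterg} the factor $r_{m-1} = |D(I,m)|/|D(I,m-1)|$ was kept bounded by assumption; here, restricting to the special times $n_m = M_m + 1$ is precisely what removes the need for that — at these times the full column $C_m$ is "seen" exactly once, so both the $L^1$ and $L^2$ quantities are governed by the same cardinality $|D(I,m)|$ rather than by a ratio of consecutive cardinalities. Concretely, I expect to prove a clean estimate of the form $\int_I (S_{n_m}(1_I))^2\, dm \leq M\, |D(I,m)|^2\, w_m^2 \cdot (\text{const})$ and match it against $(\int_I S_{n_m}(1_I)\, dm)^2 \asymp |D(I,m)|^2\, w_m^2$, so that the Renyi constant $M$ is uniform in $m$. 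I would therefore focus the argument on proving that the triple-overlap sum is $O(|D(I,m)|^2\, w_m^2)$, which is the heart of the matter; the passage from the Renyi inequality along $\{n_m\}$ to subsequence rational ergodicity is then immediate from Aaronson's result that \eqref{e:renyi} implies \eqref{e:raterg}.
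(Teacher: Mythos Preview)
Your outline is on the right track---verify the Renyi inequality \eqref{e:renyi} at the special times $n_m = M_m+1$ by exploiting that exponential growth makes $S_{n_m}(1_I)$ computable level-by-level---but you take a more roundabout route than necessary and make a few slips along the way. First, the count is reversed: for $x$ in the $l$-th descendant $I_l$ (ordered by increasing height), $T^k x \in I$ iff $h(I_l)+k \in D(I,m)$, so $S_{n_m}(1_I)(x)$ equals the number of descendant heights \emph{at least} $h(I_l)$, namely $N+1-l$ where $N=|D(I,m)|$. Second, your asymptotics drift: you correctly say $\bigl(\int_I S_{n_m}(1_I)\,dm\bigr)^2$ is of order $N^2$, but later write it as $\asymp N^2 w_m^2 = 1$; the former is right. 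Third, the triple-intersection bound ``fixing two of the three heights determines the third'' does not give $O(N^2)$ configurations---there are $\sum_{l=1}^N (N+1-l)^2 \asymp N^3/3$ triples, and it is only after multiplying by $w_m = 1/N$ that you get $O(N^2)$.

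The paper sidesteps all of this: once you know $S_{n_m}(1_I)$ takes the exact value $N+1-l$ on $I_l$, there is no need to expand into triple intersections at all. Both sides of \eqref{e:renyi} are computed directly as
\[
\int_I (S_{n_m}(1_I))^2\,dm = w_m \sum_{l=1}^N (N+1-l)^2, \qquad \int_I S_{n_m}(1_I)\,dm = w_m \sum_{l=1}^N (N+1-l),
\]
and since $w_m = 1/N$ the Renyi inequality with $M=2$ becomes the elementary power-sum inequality $N\sum_{l=1}^N l^2 \leq 2\bigl(\sum_{l=1}^N l\bigr)^2$. Your triple-overlap expansion would ultimately recover the same sums, but the direct computation of $S_{n_m}(1_I)$ as a step function is both shorter and avoids the bookkeeping errors.
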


\begin{proof}
We verify the Renyi inequality \eqref{e:renyi} with $n = M_m+1$ and $M = 2$. Let $D(I, m) = \{I_j\}$ be the descendants of $I$ in column $C_m$, and set $N = |D(I, m)|$. Order $\{I_j\}$ by height of appearance in $C_m$ so that $I_1$ is the lowermost level of $\{I_j\}$ in $C_m$ and $I_N$ is the uppermost. Denote the heights of $\{I_j\}$ in $C_m$ by $\{h(I_j)\}$. \\
\\
Now, $S_n(1_I)(x)$ is equal to the number of $k$ with $0 \leq k \leq n-1$ such that $T^k(x) \in I$. Since $T$ is exponentially growing, this implies that $S_n(1_I)$ is constant on each $I_j$ and that the value of $S_n(1_I)$ on any fixed $I_l$ is the cardinality of the intersection $(h(I_l) + \{0, 1, \cdots, n-1\}) \cap \{h(I_j)\}$. (The relevant forward images $T^kI_l$ are simply upward translations.) On the other hand, it is obvious that $h(I_j) \in (h(I_l) + \{0, 1, \cdots, n-1\})$ exactly when $j \geq l$. Hence $S_n(1_I)$ takes the value $N + 1 - l$ on $I_l$. Restricting the domain of $S_n(1_I)$ to $I$, we thus have
\[
S_n(1_I) = \displaystyle \sum_{l=1}^{N} (N + 1 - l) 1_{I_l}.
\]
\noindent
Proving (7) is thus equivalent to showing 
\[
\displaystyle \sum_{l = 1}^N (N+1-l)^2w_m \leq 2 \left( \sum_{l=1}^N (N + 1 -l)w_m \right)^2.
\]
\noindent
Now, $w_m = 1/|D(I, m)| = 1/N$. Multiplying through by $N^2$ and reindexing yields the equivalent inequality
\[
\displaystyle N\left( \sum_{l = 1}^N l^2 \right) \leq 2 \left( \sum_{l=1}^Nl \right)^2.
\]
\noindent
The result then follows from the formulas for power sums.
\end{proof}


\section{Rational Weak Mixing}\label{S:ratweakmix}  
\noindent
In this section, we present a large class of transformations that are $not$ rationally weakly mixing. We obtain as a corollary the existence of transformations which are rationally ergodic and spectrally weakly mixing, but not rationally weakly mixing. \\
\\
We begin with an example of a rank-one transformation which is subsequence rationally weakly mixing. \\
\\
First, consider the Chac\'on rank-one transformation $T$ constructed by starting with the unit interval, cutting each column in half, and adding a single spacer on top of the right subcolumn at every step \cite{Ch69}. This transformation is finite measure-preserving and weakly mixing; thus, it is rationally weakly mixing. We claim that (in particular) $T$ is rationally weakly mixing on the unit interval $I = (0, 1)$. \\
\\
It is clear from the definition of weak rational ergodicity that if $T$ is weakly rationally ergodic on $F$, then $T$ is weakly rationally ergodic on any subset of $F$. Moreover, it was shown in \cite{Aa12} that for $T$ rationally weakly mixing, the class of sets $F$ satisfying \eqref{e:raterg} is the same as the class of sets $F$ satisfying \eqref{e:rwm}. This establishes the claim. \\
\\
Now let
\[
\phi_n(A, B) = \dfrac{1}{a_n(I)} \displaystyle \sum_{k = 0}^{n-1} |\mu(A \cap T^{k}B) - \mu(A)\mu(B)u_k(I)|
\]
\noindent
be the quotient from \eqref{e:rwm}, and let $D_m$ denote the collection of dyadic intervals of the form $(i/2^m, (i+1)/2^m)$ for $0 \leq i < 2^m$. Since $D_1$ is a finite collection and $T$ is rationally weakly mixing, there exists some natural number $m_1$ such that for all $A, B \in D_1$ we have $\phi_{m_1}(A ,B) < 1/2$. We claim that in fact this inequality is true for every rank-one transformation $\tilde{T}$ which shares its first $m_1$ stages of construction with $T$ (i.e., $\tilde{C}_n = C_n$ for all $n < m_1$). Indeed, for $A, B \subset I$, the value of $\phi_{m_1}(A ,B)$ depends only on the first $m_1$ stages of the construction of $T$, since the heights $D(I, m_1)$ are all less than $h_{m_1} - m_1$. \\
\\
We now define our desired transformation. We begin by following the construction of the transformation $T$ as described above, until we reach $C_{m_1}$. Then, at the $m_1$-th iteration, we add $2h_{m_1}$ spacers above the right subcolumn. Now, adding one spacer at each subsequent iteration gives another finite measure-preserving transformation, which is also weakly mixing. Hence, there is some $m_2 > m_1$ such that $\phi_{m_2}(A, B) < 1/4$ for all $A,B \in D_{1} \cup D_{2}$. \\
\\
We thus continue adding a single spacer at each step until we reach $C_{m_2}$, at which point we add $2h_{m_2}$ spacers. Proceeding inductively in this manner, we obtain a cutting-and-stacking transformation $T$ and a sequence $\{m_i\}$ such that for each $i$, $\phi_{m_i}(A, B) < 1/2^i$ for all $A, B \in D_1 \cup D_2 \cup \cdots \cup D_i$. The result is an invertible, infinite measure-preserving transformation which is rationally weakly mixing along $\{m_i\}$ for dyadic intervals. \\
\\
In order to extend to all subsets of $I$, we use  the following result due to Aaronson \cite{Aa12}:

\begin{lemma}
Let $T$ be an invertible measure-preserving transformation on a Polish space $X$, and assume that $T$ is rationally ergodic on some open set $F$. Suppose there is a countable base $\mathcal{C}$ for the topology of $F$ such that for every finite subcollection $\{C_i\} \subset \mathcal{C}$, there exists a finite subcollection $\{D_i\} \subset \mathcal{C}$ which is disjoint and has the same union. Then to establish rational weak mixing, it suffices to prove condition \eqref{e:rwm} for elements of $\mathcal{C}$.
\end{lemma}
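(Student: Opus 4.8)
The plan is to reduce the general condition \eqref{e:rwm} for arbitrary measurable $A, B \subset F$ to the same condition for the basic sets in $\mathcal{C}$, exploiting the approximation machinery already developed in the proof of Theorem~\ref{T:wraterg}. The key quantity to control is the functional
\[
\Phi_n(A, B) = \frac{1}{a_n(F)} \sum_{k=0}^{n-1} |\mu(A \cap T^k B) - \mu(A)\mu(B)u_k(F)|,
\]
and I want to show that $\Phi_n(A, B) \to 0$ for all measurable $A, B \subset F$, given that it does so for all $A, B \in \mathcal{C}$.

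**First** I would establish that $\Phi_n$ satisfies a uniform approximate continuity estimate in each argument separately. The hypothesis that $T$ is rationally ergodic on $F$ gives (via the Renyi inequality and the argument establishing bounded rational ergodicity in the discussion after Theorem~\ref{T:renyi}) a constant $c$ with $\frac{1}{a_n(F)}\sum_{k=0}^{n-1}\mu(A \cap T^k B) \le c\,\mu(B)$ for all $A, B \subset F$ and all $n$. Using this, together with the elementary bound $\frac{1}{a_n(F)}\sum_{k=0}^{n-1}\mu(A)\mu(B)u_k(F) = \mu(A)\mu(B)$, I can control the difference $|\Phi_n(A, B) - \Phi_n(A, B')|$ when $\mu(B \triangle B') < \varepsilon$: splitting $B$ and $B'$ over their common part and applying the triangle inequality inside the absolute values, both the measure-of-intersection terms and the product terms are bounded by a fixed multiple of $\varepsilon$. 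The same holds in the first argument by the measure-preserving symmetry $\mu(A \cap T^k B) = \mu(T^{-k}A \cap B)$.

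**Next** I would handle the combinatorial hypothesis on $\mathcal{C}$. The condition that every finite subcollection of $\mathcal{C}$ has a disjoint refinement with the same union is exactly what is needed to pass from individual basis sets to finite unions of basis sets: for disjoint $\{D_i\}$, the bilinearity of both $\mu(A \cap T^k \cdot)$ and the product term lets me write $\Phi_n$ for a union as a sum of the $\Phi_n$ over the pieces (the absolute values add up under disjointness only as an inequality, but since each summand tends to $0$, a finite sum of them does too). Thus $\Phi_n(A, B) \to 0$ extends from $\mathcal{C}$ to the algebra generated by $\mathcal{C}$, i.e.\ to all finite unions of basis elements.

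**The main obstacle**, and the final step, is the density argument: given arbitrary measurable $A, B \subset F$ and $\varepsilon > 0$, I approximate each by a finite union of basis elements within $\varepsilon$ in measure --- possible because $\mathcal{C}$ is a base for the topology and $\mu$ is a regular Borel measure on a Polish space. The uniform continuity estimate from the first step then controls the error, uniformly in $n$, while the convergence on finite unions handles the main term. Concretely, choosing finite-union approximants $A', B'$ and writing $\limsup_n \Phi_n(A,B) \le \limsup_n \Phi_n(A',B') + O(\varepsilon) = O(\varepsilon)$, and letting $\varepsilon \to 0$, gives the claim. The delicate point to get right is that the approximation error must be bounded \emph{uniformly in $n$}, which is precisely why rational ergodicity (not merely weak rational ergodicity) is assumed: it is the Renyi/bounded-ergodicity estimate that supplies the $n$-independent constant $c$ making this limsup argument valid.
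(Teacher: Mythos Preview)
The paper does not prove this lemma; it cites it as a result of Aaronson \cite{Aa12} and gives no argument. So there is no paper proof to compare against, and your sketch is essentially the standard approximation argument one would expect.

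That said, there is a genuine gap in your first step. You assert that rational ergodicity on $F$ yields a constant $c$ with
\[
\frac{1}{a_n(F)}\sum_{k=0}^{n-1}\mu(A\cap T^kB)\le c\,\mu(B)
\]
for all $A,B\subset F$ and all $n$, and you justify this by appealing to ``the argument establishing bounded rational ergodicity in the discussion after Theorem~\ref{T:renyi}.'' But that discussion (which actually follows Theorem~\ref{T:wraterg}) concerns the specific rank-one transformations of that theorem, not arbitrary rationally ergodic $T$; and the paper explicitly notes, citing \cite{Aa79}, that bounded rational ergodicity is \emph{strictly stronger} than rational ergodicity. So the linear bound $c\,\mu(B)$ does not follow from the Renyi inequality alone.

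The fix is straightforward: from the Renyi inequality $\int_F S_n(1_F)^2\,dm\le M\bigl(\int_F S_n(1_F)\,dm\bigr)^2=M\,a_n(F)^2\mu(F)^4$ and Cauchy--Schwarz,
\[
\sum_{k=0}^{n-1}\mu(A\cap T^kB_1)\le\int_{B_1}S_n(1_F)\,dm\le\mu(B_1)^{1/2}\Bigl(\int_F S_n(1_F)^2\,dm\Bigr)^{1/2}\le M^{1/2}\mu(F)^2\,a_n(F)\,\mu(B_1)^{1/2},
\]
so you get $c\,\mu(B_1)^{1/2}$ rather than $c\,\mu(B_1)$. This weaker bound is still uniform in $n$ and still goes to zero with $\mu(B\triangle B')$, so the rest of your argument (disjoint refinement to pass to finite unions, then density plus regularity of $\mu$) goes through unchanged.
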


\noindent
Lemma 3.1 also holds for establishing subsequence rational weak mixing, so long as rational ergodicity is known along the same subsequence. Since the transformation $T$ above may expressed as a dyadic tower over the adding machine, $T$ is rationally ergodic \cite{Aa79}. It follows from Lemma 3.1 that $T$ is subsequence rationally weakly mixing. \\
\\
\noindent
We now present a large class of examples that are not rationally weakly mixing. It will be convenient to write
\[
u_k(A, B) = \dfrac{\mu(A \cap T^kB)}{\mu(A)\mu(B)}
\]
so that for $A$, $B$ of positive measure, we can divide \eqref{e:rwm} by $\mu(A)\mu(B)$ to obtain 
\[
\dfrac{1}{a_n(F)} \displaystyle \sum_{k = 0}^{n-1} |u_k(A,B) - u_k(F)| \rightarrow 0.
\]
It is not difficult to see that in this case we must have $a_n(F)/a_n(A,B) \rightarrow 1$ \cite{Aa12}. This yields the following theorem:

\begin{theorem}\label{T:notrwm}
Let $T$ be a rank-one transformation constructed by cutting $C_n$ in half and adding at least $c_n \geq 2h_n$ spacers on top of the right subcolumn at every step. Then $T$ is not rationally weakly mixing.
\end{theorem}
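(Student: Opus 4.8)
The plan is to reduce the problem to disproving \eqref{e:rwm} on the single set $F = I$ and then to exhibit an explicit pair $A,B\subset I$ for which the averaged $\ell^1$ expression in \eqref{e:rwm} stays bounded away from $0$. First I would note that $T$ is exponentially growing (with spacers only on the right, $h_{n+1} = 2h_n + c_n$, and $c_n \geq 2h_n$ gives $h_{n+1}\leq 2c_n$) and has a bounded number of cuts ($r_n = 2$), so Theorem~\ref{T:wraterg} shows $T$ is weakly rationally ergodic on $I$. Assuming toward a contradiction that $T$ is rationally weakly mixing, the coincidence (proved in \cite{Aa12} and quoted above) of the class of sets satisfying \eqref{e:raterg} with the class satisfying \eqref{e:rwm} forces $I$ itself to satisfy \eqref{e:rwm}. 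Note that weak rational ergodicity already gives $a_n(A,B)/a_n(I)\to 1$ for all positive-measure $A,B\subset I$, so any obstruction to \eqref{e:rwm} must come from the \emph{pointwise-in-$k$} absolute-value discrepancy rather than from the first moments; this guides the choice of test sets.

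Next I would compute the self-correlations of $I$ explicitly. Since $r_n = 2$ and all spacers sit on the right, $H_j = \{0,h_j\}$ and $D(I,N) = \{0,h_0\}\oplus\cdots\oplus\{0,h_{N-1}\}$, so every element of $D(I,N)$ is a subset sum $\sum_j \epsilon_j h_j$. The inequality $c_n\geq 2h_n$ yields $h_{j+1}\geq 4h_j$, whence $h_j > 2\sum_{i<j}h_i$; this makes the \emph{balanced} representation $k = \sum_j \delta_j h_j$ with $\delta_j\in\{-1,0,1\}$ unique. Combining uniqueness with Lemma~\ref{L:lem2.1}, a short count of the pairs $(d,d')\in D(I,N)^2$ with $d-d'=k$ gives $u_k(I) = \mu(I\cap T^kI) = 2^{-\nu(k)}$, where $\nu(k)$ is the number of nonzero balanced digits of $k$ (and $u_k(I)=0$ when $k$ is not representable).

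The heart of the argument is the choice $A = B = J_L$, the bottom level of $C_1$ (the left descendant of $I$), for which $\mu(J_L) = 1/2$. Its descendants $D(J_L,N) = H_1\oplus\cdots\oplus H_{N-1}$ involve only $h_1,\dots,h_{N-1}$, so the difference set of $D(J_L,N)$ consists exactly of those $k$ whose balanced representation has $h_0$-digit $\delta_0 = 0$. Repeating the counting argument on this smaller difference set yields $u_k(J_L,J_L) = 2\,u_k(I)$ when $\delta_0(k)=0$ and $u_k(J_L,J_L)=0$ otherwise; in either case $|u_k(J_L,J_L) - u_k(I)| = u_k(I)$ for every $k$. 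Multiplying by $\mu(J_L)^2 = 1/4$, the summand in \eqref{e:rwm} equals $\tfrac14 u_k(I)$ and
\[
\frac{1}{a_n(I)}\sum_{k=0}^{n-1}\bigl|\mu(J_L\cap T^kJ_L) - \mu(J_L)^2 u_k(I)\bigr| = \frac{1}{4\,a_n(I)}\sum_{k=0}^{n-1} u_k(I) = \frac14
\]
for every $n$. This constant never tends to $0$, contradicting \eqref{e:rwm} on $I$ and completing the proof.

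I expect the main obstacle to be the exact support-and-factor statement in the third step: that $u_k(J_L,J_L)$ is carried \emph{precisely} by $\{\delta_0(k)=0\}$ and equals exactly $2u_k(I)$ there. This rests entirely on the uniqueness of balanced $h$-representations, which requires the super-geometric growth $h_{j+1}\geq 4h_j$ supplied by $c_n\geq 2h_n$; without it the difference sets of $D(I,N)$ and $D(J_L,N)$ could realize a single $k$ through several competing representations, and the clean identity $|u_k(J_L,J_L)-u_k(I)| = u_k(I)$ would fail. (Should $s_{n,0}>0$ ever occur, one replaces $h_j$ by the left-subcolumn height $h_{j,0}$ throughout; the same dichotomy and estimate persist and only the bookkeeping changes.)
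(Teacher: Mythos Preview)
Your argument is correct, but it is organized differently from the paper's.

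The paper argues by contradiction directly against an \emph{arbitrary} witness set $F$: it picks a level $J$ that is $3/4$-full of $F$, lets $J_1,J_2$ be its two halves, and shows (via the steepness condition $c_n\ge 2h_n$) that for every $k$ at most one of $u_k(J_1\cap F)$ and $u_k(J_1\cap F,J_2\cap F)$ can be nonzero; this forces the quotient in the analogue of \eqref{e:rwm} to be $\ge 1$. Your route instead first invokes Theorem~\ref{T:wraterg} and Aaronson's coincidence of the $F$-classes for \eqref{e:raterg} and \eqref{e:rwm} to reduce to the single test set $F=I$, and then gives an exact computation for $A=B=J_L$: the uniqueness of balanced $h$-expansions yields the dichotomy $u_k(J_L)=2u_k(I)$ when $\delta_0(k)=0$ and $u_k(J_L)=0$ otherwise, so the ratio in \eqref{e:rwm} is identically $1/4$.

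Both approaches rest on the same algebraic fact (the super-geometric growth $h_{j+1}\ge 4h_j$ forces unique balanced representations), but they package it differently. Your version is cleaner and yields an exact constant, at the cost of importing two outside ingredients (Theorem~\ref{T:wraterg} and the Aaronson coincidence result). The paper's version is more self-contained---it never needs to know that $I$ is a witness set for weak rational ergodicity---and this self-containment is what lets it generalize immediately to the ``steep'' transformations of the next theorem, where the cuts need not be bounded and Theorem~\ref{T:wraterg} may not apply.
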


\begin{proof}
We prove by contradiction. Suppose that $T$ is rationally weakly mixing on some set $F$. Choose a level $J$ which is at least $(3/4)$-full of $F$, and let $J_1$ and $J_2$ be the left and right halves of $J$. 
By applying $T^{-1}$ to $F$, we may assume that $J$ is the bottom level of some column $C_j$. Now, both $J_1$ and $J_2$ intersect $F$ in positive measure, so
\[
\displaystyle \dfrac{1}{a_n(F)} \displaystyle \sum_{k = 0}^{n-1} \left| u_k(J_1 \cap F) - u_k(F) \right| \rightarrow 0
\]
and
\[
\displaystyle \dfrac{1}{a_n(F)} \displaystyle \sum_{k = 0}^{n-1} \left| u_k(J_1 \cap F, J_2 \cap F) - u_k(F) \right| \rightarrow 0.
\]
Moreover, $a_n(F)/a_n(J_1 \cap F) \rightarrow 1$. Multiplying through by this limit and using the triangle inequality, we obtain
\begin{equation}
\displaystyle \dfrac{1}{a_n(J_1 \cap F)} \displaystyle \sum_{k = 0}^{n-1} \left| u_k(J_1 \cap F, J_2 \cap F) -  u_k(J_1 \cap F) \right| \rightarrow 0.
\end{equation}
Now, fix $k$ and suppose that $u_k(J_1 \cap F) > 0$. Then $\mu(J_1 \cap T^kJ_1) > 0$, so for sufficiently large $N$ we have $k \in D(J_1, N) - D(J_1, N)$. Similarly, if $u_k(J_1 \cap F, J_2 \cap F) > 0$, then $\mu(J_1 \cap T^{k + h_j}J_1) = \mu(J_1 \cap T^kJ_2) > 0$, which implies that $k + h_j \in D(J_1, N) - D(J_1, N)$. Hence we cannot have both $u_k(J_1 \cap F)$ and $u_k(J_1 \cap F, J_2 \cap F)$ nonzero, since then we would have $h_j \in (D(J_1, N) - D(J_1, N)) - (D(J_1, N) - D(J_1, N))$. As $D(J_1, N) = \{0, h_{j+1}\} \oplus \{0, h_{j+2}\} \oplus \cdots \oplus \{0, h_{N-1}\}$, this is easily seen to be impossible (given the fact that $c_n \geq 2h_n$ for all $n$). \\
\\
It is then immediate that
\[
\left| u_k(J_1 \cap F, J_2 \cap F) -  u_k(J_1 \cap F) \right| \geq u_k(J_1 \cap F)
\]
for every $k$. Indeed, if $u_k(J_1 \cap F) = 0$ then we are done; otherwise, $u_k(J_1 \cap F, J_2 \cap F)$ is $0$. It follows that the quotient (8) is bounded below by 1, which is a contradiction. This shows that $T$ is not rationally weakly mixing.
\end{proof}
\noindent
In particular, we obtain the following:

\begin{corollary}Let $T$ be the transformation constructed by cutting each column $C_n$ in half and adding $2h_n + 1$ spacers on top of the right subcolumn.
Then $T$ is rationally ergodic and spectrally weakly mixing but not rationally weakly mixing.
\end{corollary}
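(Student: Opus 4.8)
The plan is to recognize this transformation as the common boundary case of three separate results already established (or cited) in the paper, and to assemble the corollary by checking that the single choice $s_{n,1} = 2h_n + 1$ simultaneously meets the hypotheses of all three. The only genuine computation is the arithmetic verifying that $2h_n + 1$ lands exactly on the exponential-growth threshold.

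First I would verify rational ergodicity. Since $r_n = 2$ for every $n$, the transformation has a bounded number of cuts. Computing the height recursion from the cutting-and-stacking data, $h_{n+1} = 2h_n + s_{n,0} + s_{n,1} = 2h_n + (2h_n + 1) = 4h_n + 1$, so that $2 s_{n, r_n - 1} = 2(2h_n + 1) = 4h_n + 2 \geq 4h_n + 1 = h_{n+1}$; hence $T$ grows exponentially. Theorem~\ref{T:wraterg} then applies to give weak rational ergodicity, and the discussion immediately following that theorem shows that these same transformations are in fact boundedly rationally ergodic (via the bound established in (5) with $A = I$). Since bounded rational ergodicity implies rational ergodicity, this gives the first assertion.

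Next, for spectral weak mixing, I would observe that this is precisely the Hajian--Kakutani skyscraper-type construction listed among our examples, taken at the boundary choice $s_{n,1} = 2h_n + 1$, which is known to be spectrally weakly mixing by \cite{AdFrSi97}; I would simply cite that result. Finally, the failure of rational weak mixing is immediate from Theorem~\ref{T:notrwm}: the transformation cuts each column in half and adds $s_{n,1} = 2h_n + 1 \geq 2h_n$ spacers on top of the right subcolumn at every step, so it falls directly under that theorem's hypotheses with $c_n = 2h_n + 1$. I expect no real obstacle, as the corollary is a direct synthesis of these facts; the only point demanding care is confirming the arithmetic that $2h_n + 1$ exceeds $h_{n+1}$ by exactly one, so that all three results genuinely apply to one and the same transformation.
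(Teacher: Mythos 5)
Your proposal is correct and follows essentially the same route as the paper: rational ergodicity from Theorem~\ref{T:wraterg} and the ensuing discussion on bounded rational ergodicity, spectral weak mixing from \cite{AdFrSi97}, and the failure of rational weak mixing from Theorem~\ref{T:notrwm}. Your explicit check that $2s_{n,1} = 4h_n + 2 \geq h_{n+1} = 4h_n + 1$ correctly verifies the exponential-growth hypothesis, which the paper leaves implicit.
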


\begin{proof}
 This transformation is rationally ergodic by Theorem~\ref{T:wraterg} and the discussion following (and also by \cite[Theorem 1]{Aa79}), and is spectrally weakly mixing by  \cite[Proposition 1.1]{AdFrSi97}. By Theorem~\ref{T:notrwm}, however, $T$ is not rationally weakly mixing.
\end{proof}

\noindent
This negatively answers a question of Aaronson's. (As noted in the introduction, this result was obtained independently by Aaronson in \cite[1.1]{Aa12b}.) \\
\\
\noindent
We now extend Theorem~\ref{T:notrwm} to other rank-one transformations. Define
\[
H = \displaystyle \bigcup_{j = 0}^\infty H_j \setminus \{0\}
\]
and observe that the elements of $H$ are increasing when listed in the obvious order. (Begin with successive elements of $H_0 \setminus \{0\}$, followed by successive elements of $H_1 \setminus \{0\}$, and so on.) We say that a rank-one transformation is $\mathbf{steep}$ if $t_{i + 1} \geq 4 t_i$ for every pair of successive $t_i, t_{i+1} \in H$. Clearly, the transformations of Theorem~\ref{T:notrwm} are steep. In general, such transformations can be constructed by adding an exponentially increasing number of spacers above successive subcolumns. \\
\\
Steep transformations satisfy several nice properties, chief among which is a linear independence condition that allows us to extend Theorem 3.2. Suppose we have a linear combination
\begin{equation}
\displaystyle \sum_{t \in H} c_t t = 0
\end{equation}
\noindent
with the coefficients $c_t \in \{-2, -1, 0, 1, 2\}$. Then it is easily seen that all the $c_t$ must be 0. Similarly, we also obtain a uniqueness condition that will be useful in Section 5: every integer $k$ has at most one representation 
\[
k = \sum_{t \in H} c_t t
\]
with the $c_t \in \{-1, 0, 1\}$. Altering the definition of steepness slightly yields stronger forms of these properties; for example, requiring $t_{i + 1} \geq 5 t_i$ results in uniqueness of representation with $c_t \in \{-2, -1, 0, 1, 2\}$.


\begin{theorem}
Let $T$ be a normal, steep rank-one transformation. Then $T$ is not rationally weakly mixing.
\end{theorem}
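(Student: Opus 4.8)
The plan is to follow the proof of Theorem~\ref{T:notrwm}, replacing the special structure of the half-cutting construction with the linear independence property of steep transformations. Arguing by contradiction, suppose $T$ is rationally weakly mixing on some set $F$. Using normality together with the fact that the levels generate $\mathcal{B}$, I would first choose a single level $J$ that is at least $(3/4)$-full of $F$, and (after applying a suitable power of $T^{-1}$ to $F$, exactly as in the proof of Theorem~\ref{T:notrwm}) assume that $J$ is the bottom level of some column $C_j$.

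Rather than taking width-halves of $J$, I would pass to the $r_j$ descendants $L_0, \dots, L_{r_j-1}$ of $J$ in $C_{j+1}$. These sit at the heights in $H_j$, so that $L_i = T^{g_i}L_0$ for the corresponding $g_i \in H_j$. A counting argument produces two descendants meeting $F$: since $\mu(J \setminus F) \le \mu(J)/4$ and each $L_i$ has measure $\mu(J)/r_j$, at most $\lfloor r_j/4\rfloor$ of them can lie entirely outside $F$, leaving at least two that meet $F$ in positive measure. Fix two such, $J_1 = L_a$ and $J_2 = L_b$ with $g_a < g_b$, and put $g = g_b - g_a$, so that $J_2 = T^g J_1$. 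Exactly as in Theorem~\ref{T:notrwm}, rational weak mixing applied to the pairs $(J_1 \cap F, J_1 \cap F)$ and $(J_1 \cap F, J_2 \cap F)$, together with $a_n(F)/a_n(J_1 \cap F) \to 1$, yields
\[
\frac{1}{a_n(J_1 \cap F)} \sum_{k=0}^{n-1} \bigl| u_k(J_1 \cap F, J_2 \cap F) - u_k(J_1 \cap F) \bigr| \to 0.
\]

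The heart of the argument is to show that $u_k(J_1 \cap F)$ and $u_k(J_1 \cap F, J_2 \cap F)$ are never simultaneously nonzero. By the two-sided form of Lemma~\ref{L:lem2.1}, $u_k(J_1 \cap F) > 0$ forces $k \in D(J_1, N) - D(J_1, N)$ for large $N$, while $u_k(J_1 \cap F, J_2 \cap F) > 0$ gives $\mu(J_1 \cap T^{k+g} J_1) > 0$, hence $k + g \in D(J_1, N) - D(J_1, N)$. Because the difference set $D(J_1, N) - D(J_1, N)$ lives inside $H_{j+1} \oplus \cdots \oplus H_{N-1}$, simultaneous membership would express $g = g_b - g_a$ as a relation $\sum_{t \in H} c_t t = 0$ with all $c_t \in \{-2,-1,0,1,2\}$ and $c_{g_b} = 1 \neq 0$, contradicting the linear independence guaranteed by steepness. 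Consequently $\bigl| u_k(J_1 \cap F, J_2 \cap F) - u_k(J_1 \cap F)\bigr| \ge u_k(J_1 \cap F)$ for every $k$, so the displayed average is at least $1$ for all $n$, contradicting its convergence to $0$.

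The main obstacle, compared with the half-cutting case, is the bookkeeping in this final step: one must check that combining the two membership statements genuinely yields a linear relation over $H$ whose coefficients stay within $\{-2,\dots,2\}$ and, crucially, whose coefficient at $g_b$ is not cancelled by contributions from the blocks $H_{j+1}, \dots, H_{N-1}$. This is exactly where the distinctness of the elements of $H$ across different blocks and the steepness hypothesis enter. The counting step needed to secure two usable descendants for general $r_j$ is a minor additional point that is automatic when $r_j = 2$.
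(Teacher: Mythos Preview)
Your argument is correct and follows essentially the same route as the paper's own proof, which is given only as a sketch: both choose a level $J$ that is $(3/4)$-full of $F$, locate two descendants $J_1, J_2$ of $J$ in $C_{j+1}$ meeting $F$, and use the steepness linear-independence relation (9) to show that $g = g_b - g_a \in H_j - H_j$ cannot lie in $(D(J_1,N)-D(J_1,N)) - (D(J_1,N)-D(J_1,N))$. Your write-up in fact supplies the details the paper omits (the counting to secure two descendants and the bookkeeping for the coefficients), so there is nothing to correct.
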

\begin{proof}
We sketch the proof and leave the details to the reader. As before, we proceed by contradiction. Suppose $T$ is rationally weakly mixing on $F$, and let $J$ be a level $(3/4)$-full of $F$. Without loss of generality, we may assume that $J$ is the bottom level of some column $C_j$. Now, there must exist at least two descendants $J_1$ and $J_2$ of $J$ in $C_{j+1}$ that have positive intersection with $F$. For these levels, we have $J_2 = T^dJ_1$ for some $d \in H_j - H_j$. It then suffices to show that $d$ cannot be contained in $(D(J_1, N) - D(J_1, N)) - (D(J_1, N) - D(J_1, N))$, which follows from the linear independence (9). 
\end{proof}


\section{Relation to Double Ergodicity}\label{S:doubleerg}
\noindent
In this section we show that rational weak mixing implies double ergodicity and present an example suggesting the converse implication is false. 
\\
\\
\noindent
We begin by proving that rational weak mixing on $F$ implies double ergodicity for subsets of $F$.

\begin{theorem}
Suppose that $T$ is rationally weakly mixing on $F$. Then $T$ is doubly ergodic for all $A,B \subset F$.
\end{theorem}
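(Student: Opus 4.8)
The plan is to feed two carefully chosen pairs into the rational weak mixing hypothesis \eqref{e:rwm} and then combine the resulting averages, via an elementary inequality for $\min$, so as to force a single time $k$ at which both required intersections are positive. Fix $A, B \subset F$ of positive measure; by the definition of double ergodicity I must produce a positive integer $n$ with $\mu(A \cap T^n A) > 0$ and $\mu(B \cap T^n A) > 0$ simultaneously. Applying rational weak mixing on $F$ to the pairs $(A, A)$ and $(B, A)$ gives that both
\[
\frac{1}{a_n(F)} \sum_{k=0}^{n-1} \left| \mu(A \cap T^k A) - \mu(A)^2 u_k(F)\right| \quad\text{and}\quad \frac{1}{a_n(F)} \sum_{k=0}^{n-1} \left| \mu(B \cap T^k A) - \mu(B)\mu(A) u_k(F)\right|
\]
tend to $0$. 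The crucial structural feature is that both ``target'' sequences are nonnegative scalar multiples of the common intrinsic weight sequence $u_k(F)$.

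The key elementary step is the bound $|\min(x,y) - \min(x',y')| \le |x - x'| + |y - y'|$. I would apply this with $x = \mu(A \cap T^k A)$, $x' = \mu(A)^2 u_k(F)$, $y = \mu(B \cap T^k A)$, and $y' = \mu(B)\mu(A) u_k(F)$, using that $\min(\mu(A)^2, \mu(B)\mu(A))\, u_k(F) = \mu(A)\min(\mu(A),\mu(B))\, u_k(F)$ since $u_k(F) \ge 0$. Summing over $k$, dividing by $a_n(F)$, and bounding by the two averages above yields
\[
\frac{1}{a_n(F)} \sum_{k=0}^{n-1} \left| \min\big(\mu(A \cap T^k A),\, \mu(B \cap T^k A)\big) - \mu(A)\min(\mu(A),\mu(B))\, u_k(F) \right| \to 0.
\]
Because $a_n(F) = \sum_{k=0}^{n-1} u_k(F)$, the average of the subtracted term is exactly $\mu(A)\min(\mu(A),\mu(B))$, so I conclude that $\tfrac{1}{a_n(F)} \sum_{k=0}^{n-1} \min(\mu(A \cap T^k A), \mu(B \cap T^k A))$ converges to $\mu(A)\min(\mu(A),\mu(B))$, which is strictly positive.

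From here the conclusion is essentially immediate: a sum of nonnegative terms with a strictly positive Cesàro-type limit must contain positive terms, so some $k$ has both $\mu(A \cap T^k A) > 0$ and $\mu(B \cap T^k A) > 0$. The only remaining point is to guarantee $k$ can be taken strictly positive, as double ergodicity requires a \emph{positive} integer. This is handled by conservativity of $T$, which forces $a_n(F) \to \infty$; hence the lone $k = 0$ term contributes at most $\mu(A)/a_n(F) \to 0$ to the average and cannot be responsible for the positive limit, so for all large $n$ there must be some $k \ge 1$ with both intersections positive. I expect the main obstacle to be packaging the two separate invocations of \eqref{e:rwm} cleanly through the $\min$ inequality; the rest is bookkeeping about the divergence of $a_n(F)$ and the normalization $\tfrac{1}{a_n(F)}\sum_{k=0}^{n-1} u_k(F) = 1$.
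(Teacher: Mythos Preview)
Your proof is correct and follows essentially the same strategy as the paper's: both apply \eqref{e:rwm} to two pairs involving $A$ and then combine the resulting averages additively to isolate a single time $k$ at which both intersections are positive. The paper phrases the combination via a $\delta$-threshold contradiction (choosing $\delta < \tfrac12\min(\mu(A)^2,\mu(A)\mu(B))$ and finding a $k$ with both deviations below $\delta\,u_k(F)$) rather than through your $\min$ inequality, but the underlying mechanism is the same.
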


\begin{proof}
Let $A, B \subset F$, and fix $\delta > 0$ such that 
\[
\delta < \frac{1}{2}\min(\mu(A)^2, \mu(A)\mu(B)). 
\]

\noindent
Since $T$ is rationally weakly mixing on $F$, 
\[
\dfrac{1}{a_n(F)} \displaystyle \sum_{k = 0}^{n-1} |\mu(A \cap T^kA) - \mu(A)^2u_k(F)| \rightarrow 0
\]
\noindent
and
\[
\dfrac{1}{a_n(F)} \displaystyle \sum_{k = 0}^{n-1} |\mu(A \cap T^kB) - \mu(A)\mu(B)u_k(F)| \rightarrow 0.
\]
\noindent
Summing these together, we obtain (by contradiction) that there exists a positive integer $k$ for which $u_k(F)>0$ and
\[
|\mu(A\cap T^kA)-\mu(A)^2u_k(F)|+|\mu(A\cap T^kB)-\mu(A)\mu(B)u_k(F)|<\delta u_k(F).
\]
\noindent
We thus have
\[
|\mu(A\cap T^kA)-\mu(A)^2u_k(F)|<\delta u_k(F)
\]
\noindent
and so
\[
\mu(A\cap T^kA)>u_k(F) (\mu(A)^2-\delta)>0
\]
for this $k$. Similarly, 
\[
\mu(A \cap T^kB)>u_k(F) (\mu(A)\mu(B)-\delta)>0.
\] 
By construction of $\delta$, this shows that $T$ is doubly ergodic on $F$.
\end{proof}

\noindent
We now extend this result to all of $X$. It was shown in \cite{Aa77} that if $T$ is weakly rationally ergodic on $F$, it is weakly rationally ergodic on any finite union $F_N = F \cup T(F) \cup \cdots \cup T^{N-1}(F)$. It follows that the analogous statement holds for rational weak mixing, giving the following theorem:

\begin{theorem}
Suppose that $T$ is rationally weakly mixing. Then $T$ is doubly ergodic.
\end{theorem}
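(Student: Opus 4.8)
The plan is to bootstrap from the preceding theorem, which already gives double ergodicity for pairs of sets lying inside a single set on which $T$ is rationally weakly mixing; the only remaining work is to enlarge that set so that it engulfs arbitrary sets of positive measure. Suppose $T$ is rationally weakly mixing on some $F$ of positive finite measure, and let $A, B \subset X$ be arbitrary of positive measure. Since $T$ is conservative and ergodic we have $\bigcup_{n \geq 0} T^n F = X$ mod $\mu$, so the increasing tower $F_N = F \cup T(F) \cup \cdots \cup T^{N-1}(F)$ exhausts $X$; fixing $N$ large enough, I may assume $\mu(A \cap F_N) > 0$ and $\mu(B \cap F_N) > 0$.

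The key step is to promote rational weak mixing from $F$ to each $F_N$. Aaronson showed in \cite{Aa77} that weak rational ergodicity on $F$ is inherited by every such tower $F_N$, and the discussion above has already noted that the analogous statement holds for rational weak mixing. Concretely, I would decompose any $A', B' \subset F_N$ along the pieces $A' \cap T^i F = T^i(A'_i)$ with $A'_i \subset F$; expanding $\mu(A' \cap T^k B')$ and the corresponding product term $\mu(A')\mu(B')u_k(F_N)$ into finite sums over these pieces and applying the triangle inequality reduces the absolute-value sum \eqref{e:rwm} for the pair $A', B'$ to finitely many shifted copies of \eqref{e:rwm} for pairs inside $F$. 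Using that a bounded shift of the index $k$ does not affect the normalized limit (because $u_k(F) \to 0$ while $a_n(F) \to \infty$), together with the asymptotic proportionality of $a_n(F_N)$ and $a_n(F)$ from \cite{Aa77}, one concludes that $T$ is rationally weakly mixing on $F_N$.

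With rational weak mixing established on $F_N$, I would apply the preceding theorem to the pair $A' = A \cap F_N$ and $B' = B \cap F_N$, both contained in $F_N$ and of positive measure. This yields a positive integer $n$ for which $\mu(A' \cap T^n A')$ and $\mu(B' \cap T^n A')$ are simultaneously positive. Monotonicity of the measure then gives
\[
\mu(A \cap T^n A) \geq \mu(A' \cap T^n A') > 0 \quad \text{and} \quad \mu(B \cap T^n A) \geq \mu(B' \cap T^n A') > 0,
\]
which is exactly the double ergodicity condition for the pair $A, B$. As $A, B$ were arbitrary, $T$ is doubly ergodic.

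I expect the main obstacle to be the second step: verifying that it is genuinely rational weak mixing, and not merely weak rational ergodicity, that descends to the towers $F_N$. The arithmetic bookkeeping of the decomposition is routine, but one must check that the triangle-inequality splitting does not destroy the cancellation encoded in the absolute-value sum of \eqref{e:rwm}, and that the normalizing sequences $a_n(F_N)$ and $a_n(F)$ match up correctly in the limit. Once this inheritance is in hand, conservativity, ergodicity, and the preceding theorem close the argument immediately.
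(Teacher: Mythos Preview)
Your proposal is correct and follows essentially the same route as the paper: both arguments pass from $F$ to the tower $F_N$, invoke the inheritance of rational weak mixing by $F_N$ (which the paper simply asserts from the preceding discussion and \cite{Aa77}), and then apply the previous theorem to $A\cap F_N$ and $B\cap F_N$; the only cosmetic difference is that the paper phrases it as a contradiction while you argue directly. One small correction in your sketch of the inheritance step: you do not need (and in general do not have) $u_k(F)\to 0$; the irrelevance of a bounded index shift follows already from the boundedness of the individual terms together with $a_n(F)\to\infty$.
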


\begin{proof}
Let $T$ be rationally weakly mixing on $F$, and suppose that $T$ is not doubly ergodic. Fix $A, B \subset X$ for which the double ergodicity condition fails; i.e., choose $A$ and $B$ such that for every $n$, either \linebreak $\mu(A \cap T^nA) = 0$ or $\mu(A \cap T^nB) = 0$. Since $F$ sweeps out $X$, there is some $N$ for which $F_N$ intersects both $A$ and $B$ in positive measure. Then $\tilde{A} = F_N \cap A$ and $\tilde{B} = F_N \cap B$ are sets of positive measure which fail the double ergodicity condition. But $T$ is doubly ergodic on $F_N$, a contradiction.
\end{proof}

\noindent
It is worth noting that (in general) the class of sets on which $T$ is doubly ergodic is $not$ a hereditary ring. For example, let $T$ be any doubly ergodic transformation on $X$, and define $S$ on $X \times \{0, 1\}$ by $S(x, 0) = (T(x), 1)$ and $S(x, 1) = (x, 0)$. Then $S$ is doubly ergodic on both $X \times \{0\}$ and $X \times \{1\}$, but not doubly ergodic on all of $X \times \{0, 1\}$. (Let $A = X \times \{0\}$ and $B = X \times \{1\}$.) \\
\\
We now investigate whether rational weak mixing is strictly stronger than double ergodicity. It will be useful for us consider transformations that are ``almost" steep. Recall that $T$ is steep if for any pair of successive elements $t_i, t_{i + 1}$ in $H = (H_0 \cup H_1 \cup \cdots) \setminus \{0\}$, we have $t_{i+1} \geq 4 t_i$. Now, suppose $T$ is constructed so that:
\begin{enumerate}[(a)]
\item Each column $C_n$ is cut into at least three subcolumns ($r_n \geq 3$).
\item We add zero spacers above the first subcolumn and one spacer above the second ($s_{n, 0} = 0$ and $s_{n, 1} = 1$).
\item We add a sufficient number of spacers above each subsequent subcolumn so that 
\[
\displaystyle \sum_{k = 0}^i h_{n, k} \geq 4 \left( \sum_{k = 0}^{i-1} h_{n, k} \right)
\]
for every $2 \leq i \leq r_n -1$.
\end{enumerate}
Then $T$ is ``almost" steep, in the sense that $t_{i+1} < 4t_i$ only when $t_i$ and $t_{i+1}$ are the first two nonzero elements of some $H_n$. For such $T$, we can still extract a (slightly technical) algebraic uniqueness condition in the spirit of (9). Indeed, let
\[
B_n = \{h_{n, 0}, h_{n, 0} + h_{n,1}\} \times \{h_{n, 0}, h_{n, 0} + h_{n,1}\} 
\]
and define
\[
A_n = (H_n \times H_n) \setminus (\Delta H_n \cup B_n).
\]
(Here, $\Delta H_n = \{(x, x) : x \in H_n\}$.) Then for any $(a, b), (a', b') \in A_n$ and $-M_n \leq k, k' \leq M_n$, the equality
\begin{equation}
k + a - b = k' + a' - b'
\end{equation}
implies 
\[
a = a', b = b', k = k'.
\]
\noindent
(The proof of this is not difficult and is left to the reader.) Before we proceed, it will be useful to establish following lemma:
\begin{lemma}
Let $J$ be any level, and fix $N$ sufficiently large. Suppose $(a, b) \in A_N$ and $-M_N \leq k \leq M_N$. Then
\[
\mu(J\cap T^{k + a - b}J) = \dfrac{1}{r_N} \mu(J\cap T^{k}J).
\]
\end{lemma}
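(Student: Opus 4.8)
The plan is to convert the claimed identity into a purely combinatorial comparison of two counts of difference-pairs of descendant heights, and then to exploit the multiscale structure of the sets $H_m$. First I would pick a column $C_{N'}$ with $N' > N$ large enough for Lemma~\ref{L:lem2.1} to apply to the shifts $k$ and $k + a - b$; this is legitimate by normality, since $|k + a - b|$ is bounded once $N$ is fixed. Setting $w = w_{N'}$, the lemma gives
\[
\mu(J \cap T^{k}J) = w\,|D(J,N') \cap (k + D(J,N'))|
\]
and
\[
\mu(J \cap T^{k+a-b}J) = w\,|D(J,N') \cap (k + a - b + D(J,N'))|,
\]
so it suffices to show that the second count is $1/r_N$ times the first. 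Using the formula $D(J,N') = D(J,N) \oplus H_N \oplus H_{N+1} \oplus \cdots \oplus H_{N'-1}$, I would write each descendant height as $x = p + e + g$ with $p \in D(J,N)$, $e \in H_N$, and $g \in G := H_{N+1} \oplus \cdots \oplus H_{N'-1}$, and record a difference $x - y$ as $(p - p') + (e - e') + (g - g')$.

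The next step is a scale-separation argument in two stages. Because every nonzero element of $H_N$ is at least $h_N$ while the growth conditions force $2M_N < h_N$, and because every nonzero element of $G$ is at least $h_{N+1}$, which dwarfs the entire range of $D(J,N) \oplus H_N$, the top coordinate must satisfy $g = g'$ in any admissible pair. This contributes a common factor $|G|$ to both counts, which cancels, and reduces the problem to counting pairs in $\Lambda := D(J,N) \oplus H_N$ with prescribed difference. For the shift $k$, the inequality $h_N > 2M_N$ forces $e = e'$, since $|k - (p - p')| \le 2M_N < h_N$; hence each of the $r_N$ choices of the common value $e \in H_N$ is paired with the $|D(J,N) \cap (k + D(J,N))|$ pairs $(p,p')$ satisfying $p - p' = k$, giving a count of $r_N \cdot |D(J,N) \cap (k + D(J,N))|$.

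It remains to handle the shifted count, and this is where the hypotheses $(a,b) \in A_N$ and almost-steepness carry the argument. Peeling off the small part $k - (p-p') \in [-2M_N, 2M_N]$, one is reduced to the question: for which $(e,e') \in H_N \times H_N$ does $(e - e') - (a - b)$ lie in $[-2M_N, 2M_N]$? The goal is to prove that $(e,e') = (a,b)$ is the unique solution, which would make the shifted count equal to $|D(J,N) \cap (k + D(J,N))|$ and hence give the ratio $1/r_N$ exactly. I expect this uniqueness to be the main obstacle. It is an instance of the algebraic independence of the elements of $H$ under small-coefficient combinations: a second solution would express $a - b$ as $e - e'$ up to an error of size $2M_N$, i.e. would produce a near-collision among the differences of $H_N$. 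Such a collision occurs precisely at the bottom of $H_N$, where the near-equal gaps $h_{N,0}$ and $h_{N,1} = h_{N,0} + 1$ create the exceptional configurations collected in $B_N$; restricting to $(a,b) \in A_N = (H_N \times H_N) \setminus (\Delta H_N \cup B_N)$ is designed to exclude exactly these, together with the diagonal, where $a = b$ would admit all $r_N$ spurious solutions $(e,e)$. The growth condition (c) guarantees there is no further near-collision higher up, so that away from these degenerate cases the difference $a-b$ is realized at scale $H_N$ only by $(a,b)$ itself. Carefully verifying that no extra representation survives, keeping track of the exceptional configurations, is the delicate heart of the proof.
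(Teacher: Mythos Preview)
Your proposal is correct and follows essentially the same route as the paper: reduce via Lemma~\ref{L:lem2.1} to counts of difference-pairs, use the decomposition $D(J,\cdot)=D(J,N)\oplus H_N\oplus\cdots$, and invoke the uniqueness property~(10) to pin down the $H_N$-coordinates as $(a,b)$. The only difference is cosmetic: the paper applies Lemma~\ref{L:lem2.1} at $C_N$ for the shift $k$ and at $C_{N+1}$ for the shift $k+a-b$, so the factor $1/r_N$ appears directly as $w_{N+1}/w_N$ and there is no need to pass to a larger $N'$, cancel a common factor $|G|$, or separately argue that $e=e'$ for the shift $k$. Your scale-separation argument is fine, just slightly longer than necessary; the paper's version compresses your three paragraphs into a single appeal to the bijection between representations of $k$ in $D(J,N)-D(J,N)$ and representations of $k+a-b$ in $D(J,N+1)-D(J,N+1)$, which is exactly the uniqueness~(10) you identify as the crux.
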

\begin{proof}
By Lemma~\ref{L:lem2.1}, we have
\[
\mu(J\cap T^{k}J) = w_N\cdot |D(J, N) \cap (k + D(J, N))|
\]
and
\[
\mu(J\cap T^{k + a - b}J) = w_{N+1}|D(J, N+1) \cap (k + a - b + D(J, N+1))|.
\]
By uniqueness of (10), every representation of $k + a - b$ as an element of $D(J, N+1) - D(J, N+1)$ corresponds to exactly one representation of $k$ as an element of $D(J, N) - D(J, N)$, and vice-versa. Hence
\[
\mu(J\cap T^{k + a - b}J) = \dfrac{w_{N+1}}{w_N} \mu(J_1\cap T^{k}J) = \dfrac{1}{r_N} \mu(J\cap T^{k}J),
\] 
as desired.
\end{proof}
\noindent
We now show that if $T$ is almost steep and $\{r_n\}$ is sufficiently large, $T$ cannot be rationally weakly mixing.
 
\begin{theorem}\label{T:almoststeep}
Let $T$ be a rank-one transformation. Suppose that $T$ is almost steep (as described above), and that
\[
\displaystyle \sum_{n = 0}^{\infty} \dfrac{1}{r_n} < \infty.
\]
Then $T$ is not rationally weakly mixing.
\end{theorem}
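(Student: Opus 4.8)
The plan is to argue by contradiction, following the template of Theorem~\ref{T:notrwm} but replacing the exact support‑disjointness used there by a quantitative estimate in which the hypothesis $\sum_n 1/r_n<\infty$ controls the (now nonempty) overlap of the relevant correlation sequences. Suppose $T$ is rationally weakly mixing on some set $F$. Since $\sum_n 1/r_n<\infty$, its tails vanish, so I would first fix $j$ so large that $\sum_{i\ge j}1/r_i$ is below a prescribed threshold (say $<1/8$). Because the density of $F$ in the level containing a typical point of $F$ tends to $1$ as the column index grows, one may then choose a level $J$ in a column $C_j$ with this large $j$ that is $(3/4)$‑full of $F$; after applying a power of $T$, assume $J$ is the bottom level of $C_j$. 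As in Theorem~\ref{T:notrwm}, the descendants of $J$ in $C_{j+1}$ missing $F$ have total measure at most $\tfrac14\mu(J)$, so at least $\tfrac34 r_j\ge \tfrac94$, hence at least three, meet $F$ in positive measure. I would then select two of them, $J_1$ and $J_2=T^dJ_1$ with $d=b_0-a_0$, whose heights $a_0,b_0\in H_j$ form a pair $(a_0,b_0)\in A_j$; this only requires that not both heights lie in $\{h_{j,0},\,h_{j,0}+h_{j,1}\}$, which having three available descendants guarantees.

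Next I would extract from rational weak mixing the same consequence as in Theorem~\ref{T:notrwm}. Writing $u_k(\cdot)$ and $u_k(\cdot,\cdot)$ for the quantities attached to $J_1\cap F$ and $J_2\cap F$, the relation \eqref{e:rwm} for the pairs $(J_1\cap F,J_1\cap F)$ and $(J_1\cap F,J_2\cap F)$, together with $a_n(F)/a_n(J_1\cap F)\to1$ (as noted before Theorem~\ref{T:notrwm}), yields
\[
\frac{1}{a_n(J_1\cap F)}\sum_{k=0}^{n-1}\bigl|u_k(J_1\cap F,J_2\cap F)-u_k(J_1\cap F)\bigr|\to 0.
\]
I would then bound this sum from below. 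Using the elementary inequality $|x-y|\ge x-2\min(x,y)$ for $x,y\ge 0$, the problem reduces to controlling the overlap
\[
\mathrm{OV}_n=\sum_{k=0}^{n-1}\min\bigl(u_k(J_1\cap F),\,u_k(J_1\cap F,J_2\cap F)\bigr),
\]
since the displayed sum is then at least $a_n(J_1\cap F)-2\,\mathrm{OV}_n$.

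The heart of the argument — and the step I expect to be the main obstacle — is the estimate $\mathrm{OV}_n\le C\bigl(\sum_{i\ge j}1/r_i\bigr)a_n(J_1\cap F)$. A summand is nonzero only for those $k$ with both $k\in D(J_1,N)-D(J_1,N)$ and $k+d\in D(J_1,N)-D(J_1,N)$ for large $N$, i.e.\ when $d$ admits a representation in $\bigl(D(J_1,N)-D(J_1,N)\bigr)-\bigl(D(J_1,N)-D(J_1,N)\bigr)$. Since $(a_0,b_0)\in A_j$, the number $d$ is a ``clean'' level‑$j$ difference, and by the linear‑independence and uniqueness phenomena established above for almost‑steep $T$, any representation of $d$ at levels $i\ge j+1$ must route the discrepancy through the excluded Chac\'on pairs $B_i$. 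The preceding lemma quantifies the cost of such a detour: a shift by a clean amount $a-b$ with $(a,b)\in A_i$ scales the correlation by exactly $1/r_i$, so each Chac\'on absorption needed to build $d$ contributes a factor comparable to $1/r_i$. Organizing the overlap mass according to the level $i\ge j+1$ at which the first such absorption occurs, and summing, is what I expect to produce the bound by the tail $\sum_{i\ge j}1/r_i$. Carrying out this bookkeeping rigorously — tracking the interaction between the clean scalings from the lemma and the finitely many exceptional Chac\'on contributions at each level, and checking that multiplicities do not accumulate — is the technical core of the proof.

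Granting this estimate, the conclusion is immediate:
\[
\frac{1}{a_n(J_1\cap F)}\sum_{k=0}^{n-1}\bigl|u_k(J_1\cap F,J_2\cap F)-u_k(J_1\cap F)\bigr|\ \ge\ 1-2C\sum_{i\ge j}1/r_i,
\]
and the right‑hand side is bounded below by a positive constant once $j$ was chosen large. This contradicts the limit extracted from rational weak mixing, so $T$ is not rationally weakly mixing. The delicate point throughout is that, unlike the steep case, the two correlation sequences are no longer disjointly supported; the entire content of the theorem is that the hypothesis $\sum_n 1/r_n<\infty$ forces their overlap to be asymptotically negligible compared with $a_n(F)$.
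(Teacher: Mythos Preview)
Your outline diverges from the paper's argument at the crucial step, and the step you flag as ``the technical core'' is genuinely missing rather than merely unwritten.

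The paper does \emph{not} attempt to bound the overlap $\mathrm{OV}_n$ by a tail sum. Instead it works with the pure levels $J_1,J_2$ (not their intersections with $F$), sets
\[
P_m=\sum_{|k|\le M_m}\bigl|\mu(J_1\cap T^kJ_1)-\mu(J_1\cap T^{k+d}J_1)\bigr|,\qquad
Q_m=\sum_{|k|\le M_m}\mu(J_1\cap T^kJ_1),
\]
and proves a one-step recursion. Lemma~4.3 says that for each ``clean'' pair $(a,b)\in A_m$ the shift $k\mapsto k+a-b$ scales both intersections by exactly $1/r_m$; since the map $(k,a,b)\mapsto k+a-b$ is injective on $[-M_m,M_m]\times A_m$ by the uniqueness property~(10), one gets $P_{m+1}\ge (|A_m|/r_m)\,P_m$. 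Together with $Q_{m+1}=r_m Q_m$ this yields
\[
\frac{P_{m+1}}{Q_{m+1}}\ \ge\ \frac{|A_m|}{r_m^2}\cdot\frac{P_m}{Q_m}
\ =\ \Bigl(1-\tfrac{1}{r_m}-\tfrac{2}{r_m^2}\Bigr)\frac{P_m}{Q_m},
\]
so $P_m/Q_m$ is bounded below by the convergent product $\prod_k(1-1/r_k-2/r_k^2)>0$, which is exactly where $\sum 1/r_n<\infty$ enters. Combined with subsequence rational ergodicity along $\{M_m+1\}$ (Theorem~\ref{T:renyi}), this blocks rational weak mixing.

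Your proposed route would have to control, for each overlap time $k$, not just \emph{whether} both correlations are positive but their \emph{size}, and then sum. The heuristic ``organize by the level of the first Chac\'on absorption and pick up a factor $1/r_i$'' is exactly the content of the recursion above, but packaged level by level; making it precise requires the same injectivity~(10) and the same scaling Lemma~4.3, and once you have those the recursive inequality is the clean way to assemble them. In other words, your overlap bound, if provable at all, would be proved by the paper's argument --- it is not an alternative to it. There is also a secondary issue: you work with $u_k(J_1\cap F)$ and $u_k(J_1\cap F,J_2\cap F)$, but you have no lower control on $\mu(J_i\cap F)/\mu(J_i)$ for the individual descendants you selected (only on the parent $J$), which complicates passing between the $F$-intersected and pure-level quantities. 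The paper sidesteps this by computing with the levels themselves.
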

\begin{proof}
We begin by proving that $T$ is not rationally weakly mixing on levels. Let $J$ be the bottom level of any column $C_j$, and let $J_1$ and $J_2$ be any two descendants of $J$ in $C_{j+1}$. Then $J_1 = T^dJ_2$ for some $d \in H_j - H_j$. As in Theorem~\ref{T:notrwm}, it suffices to disprove the convergence
\begin{equation}
\displaystyle \dfrac{1}{a_n(J_1)} \displaystyle \sum_{k = 0}^{n-1} \left|u_k(J_1) - u_k(J_1, J_2) \right| \rightarrow 0.
\end{equation}
To do this, define
\[
P_m = \sum\limits_{k = -M_m}^{M_m}|\mu(J_1\cap T^kJ_1)-\mu(J_1\cap T^{k+d}J_1)|
\]
and
\[
Q_m = \sum\limits_{k = -M_m}^{M_m}\mu(J_1\cap T^kJ_1).
\]
For $m$ sufficiently large, $R_m = P_m/ Q_m$ approximates the quotient (11), so it is enough to show that $R_m$ is bounded below by some positive constant. \\
\\
\noindent
Any choice of $(a, b) \in A_m$ and $- M_m \leq k \leq M_m$ yields a unique number $k + a - b$ between $-M_{m+1}$ and $M_{m+1}$. Hence 
\begin{align*}
P_{m+1} &= \sum\limits_{k = -M_{m+1}}^{M_{m+1}}|\mu(J_1\cap T^kJ_1)-\mu(J_1\cap T^{k+d}J_1)| \\
&\geq \sum_{(a,b) \in A_m} \sum\limits_{k = -M_m}^{M_m}|\mu(J_1\cap T^{k + a - b}J_1)-\mu(J_1\cap T^{k+a-b+d}J_1)| \\
&= \dfrac{1}{r_m} \left(\sum_{(a,b) \in A_m}\sum\limits_{k = -M_m}^{M_m}|\mu(J_1\cap T^{k}J_1)-\mu(J_1\cap T^{k+d}J_1)|\right) \\
&= \dfrac{|A_m|}{r_m} P_m.
\end{align*}
\noindent
Moreover, the same argument as in Theorem~\ref{T:wraterg} shows
\[
Q_m = \sum\limits_{k = -M_m}^{M_m}\mu(J_1\cap T^kJ_1) = |D(J_1, m)|\mu(J_1)
\]
from which it follows that
\[
Q_{m+1} = r_mQ_m.
\]
We thus obtain
\[
R_{m+1} \geq \dfrac{|A_m|}{r_m^2} R_m.
\]
Now, $|A_m| = r_m^2 - r_m - 2$, so $R_m$ is bounded below by 
\[
\displaystyle \prod_{k=0}^{\infty} \left(1 - \dfrac{1}{r_k} - \dfrac{2}{r_k^2}\right)R_0
\]
which is a positive constant by the hypotheses of the theorem. This bounds (10) from below along the sequence $\{M_m + 1\}$. Since $T$ is rationally ergodic along the same sequence by Theorem~\ref{T:renyi}, it follows that $T$ is not rationally weakly mixing. 
\end{proof}

\noindent
We now show that $T$ is doubly ergodic for levels, suggesting that rational weak mixing is strictly stronger than double ergodicity.

\begin{lemma}
The transformation $T$ above is doubly ergodic for levels.
\end{lemma}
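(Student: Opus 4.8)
The plan is to reduce double ergodicity for two arbitrary levels $A$ and $B$ to a purely combinatorial statement about descendant heights, and then to exploit the near-collision $\{h_n, 2h_n+1\} \subset H_n$ created by the choices $s_{n,0}=0$ and $s_{n,1}=1$. Note first that $T$ is normal (the last subcolumn always receives spacers), so Lemma~\ref{L:lem2.1} applies.

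I would begin by reducing to single levels in a common column. Every level is a disjoint union of levels of any later column, and for $A_0 \subseteq A$, $B_0 \subseteq B$ one has $\mu(A\cap T^nA)\geq \mu(A_0\cap T^nA_0)$ and $\mu(B\cap T^nA)\geq \mu(B_0\cap T^nA_0)$; hence it suffices to find a single $n>0$ for which $T^nA_0$ meets both $A_0$ and $B_0$, where $A_0,B_0$ are single levels of a common column $C_j$ of heights $\alpha=h(A_0)$ and $\beta=h(B_0)$. By Lemma~\ref{L:lem2.1} and its evident generalization to $\mu(A_0\cap T^kB_0)$, for $N$ large these two positivity conditions are equivalent to $n\in D(A_0,N)-D(A_0,N)$ and $n\in D(B_0,N)-D(A_0,N)$. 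Writing $G=H_j\oplus\cdots\oplus H_{N-1}$, so that $D(A_0,N)=\alpha+G$ and $D(B_0,N)=\beta+G$, and setting $d=\beta-\alpha$, the task becomes: find $p,q\in G-G$ with $p-q=d$, and take $n=p>0$ (so that $n\in G-G$ and $n-d=q\in G-G$).

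The key point, and the feature distinguishing almost steep transformations from steep ones, is that the excluded set $B_n$ reflects a genuine near-collision. Since $s_{n,0}=0$ and $s_{n,1}=1$ and $r_n\geq 3$, both $h_n$ and $2h_n+1$ lie in $H_n$, so $h_n+1=(2h_n+1)-h_n\in H_n-H_n$. Thus each slot $H_i-H_i$ contains both $h_i$ and $h_i+1$, whose difference is exactly $1$, and both lie in $G-G$. I would then build $d$ one unit at a time across distinct slots: put $S=\sum_{i=j}^{j+|d|-1}h_i\in G-G$, and observe $S+|d|=\sum_{i=j}^{j+|d|-1}(h_i+1)\in G-G$, these two differing by exactly $|d|$. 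For $d>0$ take $n=S+d$ (so $n-d=S$); for $d<0$ take $n=S$ (so $n-d=S+|d|$); for $d=0$ take $n=h_j$. In every case $n>0$, and both $n$ and $n-d$ lie in $G-G$ provided $N$ runs past slot $j+|d|$. Since the resulting $n$ is a fixed integer independent of $N$, I can finally enlarge $N$ so that Lemma~\ref{L:lem2.1} applies to this $n$, upgrading the membership statements into positivity of the two measures.

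The main obstacle is conceptual rather than computational: the differences $d=\beta-\alpha$ one must realize can be as large as $h_j-1$, whereas the nonzero atoms available in each $H_i-H_i$ are of size $\sim h_i$, so at first glance $G-G$ looks far too coarse to separate heights by a small amount. The resolution is precisely the near-collision above, which lets a single slot contribute a net $+1$; distributing these across the infinitely many available slots $j,j+1,\dots$ produces any prescribed $d$. I would emphasize that this argument uses only properties (a) and (b) of almost steepness, and neither (c) nor $\sum 1/r_n<\infty$, consistent with double ergodicity being a strictly weaker conclusion than the failure of rational weak mixing proved in Theorem~\ref{T:almoststeep}.
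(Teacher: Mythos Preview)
Your proof is correct and follows essentially the same approach as the paper: both exploit the near-collision $h_i,(2h_i+1)\in H_i$ to obtain $h_i$ and $h_i+1$ in $H_i-H_i$, then spread the target offset $d$ across $|d|$ consecutive slots as $\sum h_i$ versus $\sum (h_i+1)$. The only cosmetic differences are that the paper normalizes $A$ to be the bottom level of $C_j$ (so $d\ge 0$) and uses slots $j+1,\dots,j+d$, whereas you pass to sublevels and use slots $j,\dots,j+|d|-1$; your treatment of the sign of $d$ and of the enlargement of $N$ is slightly more explicit than the paper's.
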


\begin{proof}
We check that for any pair of levels $A$ and $B$, there exists an integer $n$ such that both $\mu(A\cap T^nA)>0$ and $\mu(B\cap T^nA)>0$. Without loss of generality, we may assume that $A$ is the bottom level of some column $C_j$ and that $B = T^dA$. It then suffices to prove there exists an $n$ such that both $n$ and $n+d$ are in $D(A,N)-D(A,N)$ (for $N$ sufficiently large). This is easy; simply choose
\[
n = h_{j+1,0}+\cdots+h_{j+d,0}.  
\]
Then 
\[
n + d = ((2h_{j+1, 0} + 1) +\cdots + (2h_{j+d,0} + 1)) - (h_{j+1,0}+\cdots+h_{j+d,0}),
\]
as desired.
\end{proof}


\section{Independence from Zero-type}\label{S:zerotype} 
\noindent
We now show that (subsequence) rational weak mixing and zero-type are independent (i.e., do not imply each other).
We say that $T$ is $\mathbf{zero}$-$\mathbf{type}$ if $\mu(A \cap T^nA) \rightarrow 0$ for all sets $A$ of finite measure \cite{HaKa64}. It is well-known that in order to show a conservative ergodic transformation is zero-type, it suffices to check this convergence for a single set $A$ of positive finite measure \cite{HaKa64}. We show that every steep transformation with an increasing number of cuts is zero-type. 

\begin{theorem}
Let $T$ be a normal,  steep rank-one transformation, and suppose that $\{r_n\}$ is nondecreasing with $\sup\{r_n\} = \infty$. Then $T$ is zero-type.
\end{theorem}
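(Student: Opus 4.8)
The plan is to verify zero-type on the single set $A = I$, which suffices by the criterion of \cite{HaKa64} quoted above. By Lemma~\ref{L:lem2.1}, for each fixed $n$ and all $N$ sufficiently large we have $\mu(I \cap T^n I) = w_N \cdot |D(I,N) \cap (n + D(I,N))|$, where $w_N = 1/|D(I,N)|$. Thus $\mu(I \cap T^n I)$ equals $|D(I,N)|^{-1}$ times the number of pairs $(x,y) \in D(I,N)^2$ with $x - y = n$. Since $D(I, N) = H_0 \oplus H_1 \oplus \cdots \oplus H_{N-1}$ and the sum is direct (the $\{-1,0,1\}$-uniqueness of steepness makes each element of $D(I,N)$ have a unique block expansion), every $x \in D(I,N)$ has a unique expansion $x = \sum_j a_j$ with $a_j \in H_j$, and likewise $y = \sum_j b_j$; the constraint $x - y = n$ becomes $\sum_j (a_j - b_j) = n$.

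The crucial step is to count these pairs using steepness. Each nonzero element of $H_j$ is a single element of $H$, and the sets $H_j \setminus \{0\}$ are disjoint blocks of $H$ whose elements strictly increase with $j$. Hence $n = \sum_j (a_j - b_j)$ exhibits $n$ as a combination $\sum_{t \in H} c_t t$ in which the coefficient of each $t \in H_j$ is $c_t = [\,t \text{ occurs among the } a_j\,] - [\,t \text{ occurs among the } b_j\,] \in \{-1,0,1\}$. By the uniqueness of representation for steep transformations, the vector $(c_t)$ is determined by $n$ alone (and $\mu(I\cap T^nI)=0$ if no such representation exists). I would then check block by block that an \emph{inactive} block (one with $c_t = 0$ for all $t \in H_j$) forces $a_j = b_j$ and therefore contributes $r_j$ admissible pairs, while an \emph{active} block (some $c_t \neq 0$) pins down $a_j$ and $b_j$ uniquely and contributes a single pair. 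Writing $S(n)$ for the set of active block indices, the number of pairs with $x - y = n$ is $\prod_{j \notin S(n)} r_j$, so that, since $|D(I,N)| = \prod_{j=0}^{N-1} r_j$, we obtain the clean formula $\mu(I \cap T^n I) = \prod_{j \in S(n)} r_j^{-1}$.

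It then remains to show $\prod_{j \in S(n)} r_j \to \infty$ as $n \to \infty$. Let $t^{\ast}$ be the largest element of $H$ with $c_{t^{\ast}} \neq 0$ in the representation of $n$. Steepness ($t_{i+1} \geq 4 t_i$) makes the top term dominate: the tail satisfies $\sum_{t < t^{\ast}} t \leq t^{\ast}(4^{-1} + 4^{-2} + \cdots) = t^{\ast}/3$, whence $|n| \leq (4/3) t^{\ast}$ and so $t^{\ast} \geq (3/4)|n| \to \infty$. Since each $H_j$ is nonempty (as $r_j \geq 2$) and the blocks' elements increase with $j$, the index $j^{\ast}$ of the block containing $t^{\ast}$ tends to infinity with $n$. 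Because $\{r_j\}$ is nondecreasing with $\sup_j r_j = \infty$, this gives $r_{j^{\ast}} \to \infty$; and since $j^{\ast} \in S(n)$, we conclude $\mu(I \cap T^n I) \leq r_{j^{\ast}}^{-1} \to 0$, so $T$ is zero-type.

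The main obstacle I anticipate is the exact pair-count in the second paragraph: one must argue carefully that the $\{-1,0,1\}$-uniqueness from steepness not only fixes the net coefficients $(c_t)$ but also, block by block, converts the enumeration of pairs $(a_j,b_j)$ into the product $\prod_{j \notin S(n)} r_j$, with inactive blocks supplying the factors $r_j$ and active blocks supplying $1$. Once this combinatorial identity is in place, the remaining estimate reducing $t^{\ast}$ (and hence $j^{\ast}$ and $r_{j^{\ast}}$) to infinity is routine.
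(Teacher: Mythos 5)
Your proposal is correct and follows essentially the same route as the paper: reduce to the single set $I$, use Lemma~\ref{L:lem2.1} and the $\{-1,0,1\}$-uniqueness from steepness to show that the pair count factors as $\prod r_j$ over the inactive blocks, hence $\mu(I\cap T^nI)=\prod_{j\in S(n)}r_j^{-1}$, and then observe that the largest active block index tends to infinity so that the nondecreasing, unbounded $\{r_j\}$ forces the product to vanish. The only cosmetic difference is that you locate the top active block via the geometric-series estimate $t^{\ast}\geq (3/4)|n|$, whereas the paper notes directly that $k>M_n$ forces an active index $m\geq n$; both are fine.
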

\begin{proof}
Consider $I = (0, 1)$. For $N$ sufficiently large, we have
\[
\mu(I \cap T^k I) = \dfrac{|D(I, N) \cap (k + D(I, N))|}{|D(I, N)|}.
\]
Now, $|D(I, N) \cap (k + D(I, N))|$ counts the number of representations
\begin{equation}
k = \displaystyle \sum_{i = 0}^{N-1} (d_i - d_i')
\end{equation}
with $d_i, d'_i \in H_i$. (Recall that $D(I, N) = H_0 \oplus H_1 \oplus \cdots \oplus H_{N-1}$.) If $k \notin D(I, N) - D(I, N)$, then $\mu(I \cap T^k I) = 0$, so suppose that $k \in D(I, N) - D(I, N)$. Then there is at least one representation
\begin{equation}
k = \displaystyle \sum_{i = 0}^{N-1} (x_i - x_i')
\end{equation}
with $x_i, x'_i \in H_i$. Now, fix $n$ and suppose $x_n - x'_n \neq 0$. By uniqueness of representation, any other representation (12) of $k$ must have $d_n = x_n$ and $d'_n = x'_n$. In particular, the only indices $i$ at which (12) can differ from (13) are those for which $x_i - x'_i = 0$. In these cases we must have $d_i = d'_i$, but otherwise there are no restrictions (i.e., $d_i = d'_i$ can be any element of $H_i$). Hence
\[
\displaystyle |D(I, N) \cap (k + D(I, N))| = \prod \limits_{x_i - x'_i = 0} |H_i|
\]
with the product being taken over all $i$ for which $x_i - x'_i = 0$. Since
\[
\displaystyle |D(I, N)| = \prod \limits_{i = 0}^{N-1} |H_i|
\]
it follows that
\[
\mu(I \cap T^k I) = \left( \prod \limits_{x_i - x'_i \neq 0} |H_i| \right)^{-1}.
\]
Now, if $k > M_n$, then the representation (13) of $k$ must have $x_m - x'_m \neq 0$ for some $m \geq n$. This implies that
\[
\mu(I \cap T^k I) \leq \dfrac{1}{|H_m|} = \dfrac{1}{r_m} \leq \dfrac{1}{r_n},
\]
which shows $\mu(I \cap T^k I) \rightarrow 0$ as $k \rightarrow \infty$. Hence $T$ is zero-type, as desired. 
\end{proof}
\noindent
We thus have:
\begin{theorem}
There exist rank-one transformations that are zero-type but not rationally weakly mixing.
\end{theorem}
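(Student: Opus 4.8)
The plan is to exhibit a single rank-one transformation that simultaneously meets the hypotheses of the preceding zero-type theorem and of the theorem in Section~\ref{S:ratweakmix} asserting that normal, steep transformations are not rationally weakly mixing. Concretely, it suffices to build a $T$ that is normal and steep and whose cut sequence $\{r_n\}$ is nondecreasing with $\sup\{r_n\} = \infty$: the preceding theorem then yields that $T$ is zero-type, while the Section~\ref{S:ratweakmix} theorem yields that $T$ is not rationally weakly mixing.

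First I would fix any nondecreasing, unbounded sequence of cuts, for instance $r_n = n+2$, and construct $T$ by cutting and stacking with these cut numbers. The only remaining freedom is in the spacer counts $s_{n,0}, \dots, s_{n, r_n - 1}$, which I would choose large enough to force the geometric growth demanded by steepness. Recall that the nonzero elements of $H_n$ are the partial sums $\sum_{k=0}^{i} h_{n,k}$ for $0 \le i < r_n - 1$. Within each stage I would require $h_{n,i} \ge 3\sum_{k=0}^{i-1} h_{n,k}$, so that consecutive partial sums grow by a factor of at least $4$; across the boundary between stages I would require the smallest nonzero element $h_{n+1,0}$ of $H_{n+1}$ to be at least $4$ times the largest element $\sum_{k=0}^{r_n - 2} h_{n,k}$ of $H_n$. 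Each of these is achieved simply by taking the relevant spacer number sufficiently large, and together they give $t_{i+1} \ge 4 t_i$ for every pair of successive elements of $H = \bigcup_j H_j \setminus \{0\}$, so $T$ is steep.

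To guarantee normality I would additionally demand $s_{n, r_n - 1} > 0$ for every $n$. Note that $s_{n, r_n - 1}$ counts the spacers above the rightmost subcolumn and does not enter any of the partial sums that make up $H_n$, so imposing its positivity is independent of the steepness inequalities above. This makes $s_{n, r_n - 1} > 0$ for infinitely many $n$, hence $T$ is normal; and the large spacer counts force $\mu(X) = \infty$, so $T$ is an invertible, conservative, ergodic, infinite measure-preserving rank-one transformation of the required kind.

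With such a $T$ in hand the theorem is immediate: by the preceding theorem $T$ is zero-type, and by the theorem in Section~\ref{S:ratweakmix} $T$ is not rationally weakly mixing. I do not anticipate a genuine obstacle here, since the argument is essentially a matter of bookkeeping; the only point to verify is that the steepness inequalities, the positivity of $s_{n, r_n - 1}$, and the unboundedness of $\{r_n\}$ are mutually compatible, which they are because each is secured by an independent choice of spacer and cut numbers at the $n$-th stage.
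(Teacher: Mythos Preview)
Your proposal is correct and matches the paper's approach: the paper states this theorem as an immediate corollary (``We thus have:'') of the preceding zero-type result combined with the Section~\ref{S:ratweakmix} theorem that normal, steep rank-one transformations are not rationally weakly mixing. Your explicit construction of a normal, steep $T$ with unbounded nondecreasing cut sequence, and your verification that the steepness inequalities, the positivity of $s_{n,r_n-1}$, and the growth of $r_n$ are mutually compatible, simply fill in the bookkeeping the paper leaves implicit.
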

\noindent
In \cite{Aa12b}, Aaronson recently constructed a zero-type transformation of the form $T\times S$, where $S$ is a Markov shift, such that $T\times S$ is not subsequence rationally weakly mixing.
 Our examples, however, are rank-one, so of a different nature, and  were constructed independently.\\
\\
\noindent
We note that it follows from Theorem F in Aaronson ~\cite{Aa12} that there exist subsequence rationally weakly mixing transformation of  positive type; a rank-one example is given by the subsequence rationally weakly mixing transformation of Section 3. (Indeed, this is partially rigid since $\mu(I \cap T^{h_i}I) \geq 1/2$ for every $i$.) Aaronson ~\cite{Aa12b} also constructed 
positive-type, rank-one, transformations that are not subsequence rationally ergodic.


\section{Examples of Rational Weak Mixing}\label{S:ratweakmixex}
\noindent
We end with a construction of a positive-type rank-one transformation which is rationally weakly mixing. Let $T$ be a Chac\'on-like transformation ($r_n = 3$, $\{s_{n, 0} = 0, s_{n, 1} = 1, s_{n, 2} \geq 3h_n + 1\}$) with enough spacers added above every third subcolumn so as to have $h_{n+1} = 3^c h_n$ for some fixed integer $c \geq 2$. Then $h_n = 3^{cn}$ and $D(I, n) = \displaystyle H_0 \oplus H_1 \oplus \cdots \oplus H_{n-1}$, where $H_i = \{0,h_i,2h_i +1\}$. 

\begin{theorem}\label{T:ratwm}
The above transformation $T$ is rationally weakly mixing.
\end{theorem}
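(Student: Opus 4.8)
The plan is to take $F = I = (0,1)$ and to reduce rational weak mixing to a single smoothing estimate for an explicit autocorrelation sequence. First I would record that $T$ is exponentially growing with a bounded number of cuts ($r_n = 3$), so by Theorem~\ref{T:wraterg} and the discussion following it $T$ is rationally ergodic on $I$. Aaronson's Lemma (Lemma~3.1), applied with $\mathcal{C}$ the countable base of levels (any finite family of levels refines to a disjoint family of descendants in a common column), then reduces \eqref{e:rwm} to the case where $A$ and $B$ are levels. Applying a power of $T$ to both, I may assume $A = J$ is the bottom level of some $C_j$ and $B = T^d J$ with $0 \le d < h_j$.

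Next I would set up a probabilistic reformulation. Writing $D(I,m) = H_0 \oplus \cdots \oplus H_{m-1}$ and choosing independent digits $\xi_i$ uniform on $H_i = \{0, h_i, 2h_i + 1\}$, the variable $X = \sum_{i=0}^{m-1}\xi_i$ is uniform on $D(I,m)$, and I split $X = Z + Y$ with $Z = \sum_{i<j}\xi_i$ and $Y = \sum_{j \le i <m}\xi_i$ independent. Using Lemma~\ref{L:lem2.1} (valid with $N = m$ along the subsequence $n = M_m + 1$, since exponential growth forces $h_m \ge 2M_m$), one gets $u_k(I) = |D(I,m)|\,\Pr[X - X' = k]$ and $u_k(A,B) = |D(I,m)|\,\Pr[Y - Y' = k + d]$ for independent copies. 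Since $a_{M_m+1}(I) = (|D(I,m)|+1)/2$, the one-sided sum in \eqref{e:rwm} is dominated by the two-sided sum, which is at most $4\,d_{\mathrm{TV}}(W_m - d,\ Z^{\Delta} + W_m) + o(1)$, where $W_m = Y - Y'$ and $Z^{\Delta} = Z - Z''$ is a bounded integer perturbation. A one-line triangle-inequality estimate over the finitely many values of $Z^{\Delta}$ and $d$ then reduces the entire theorem to showing $d_{\mathrm{TV}}(W_m, W_m + s) \to 0$ for each fixed integer $s$, and hence, since $d_{\mathrm{TV}}(W_m, W_m+s) \le |s|\, d_{\mathrm{TV}}(W_m, W_m+1)$, to the case $s = 1$.

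The crux is this smoothing estimate, and it is exactly where the Chac\'on ``$+1$'' spacer enters. The increment $\Delta_i = \eta_i - \eta_i'$ places mass $1/9$ on each of the two \emph{consecutive} integers $h_i$ and $h_i + 1$. I would therefore write $\Delta_i$ as a mixture which is ``active'' with probability $2/9$, in which case $\Delta_i = h_i + U_i$ with $U_i$ a fair coin on $\{0,1\}$, and otherwise takes its remaining values. Conditioning on the active set and on all inactive increments, $W_m$ becomes a fixed integer plus $\mathrm{Bin}(K_m, 1/2)$, where $K_m$ is the number of active levels and $K_m \to \infty$ almost surely. Since a binomial is unimodal, $d_{\mathrm{TV}}(\mathrm{Bin}(K,1/2),\ \mathrm{Bin}(K,1/2)+1) = \max_{\ell} \binom{K}{\ell}2^{-K} = O(K^{-1/2})$, and averaging over the conditioning by dominated convergence yields $d_{\mathrm{TV}}(W_m, W_m+1) \to 0$.

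Finally I would pass from the subsequence $\{M_m + 1\}$ to all $n$: the summand in \eqref{e:rwm} is nonnegative and $a_n(I)$ is increasing with $M_{m+1}/M_m$ bounded (bounded cuts), so monotone sandwiching promotes subsequential convergence to full convergence, exactly as in the proof of Theorem~\ref{T:wraterg}; together with Lemma~3.1 this gives rational weak mixing on $I$. I expect the main obstacle to be the smoothing estimate of the third paragraph: rational weak mixing demands an $L^1$ (total-variation) statement rather than mere spectral continuity or Ces\`aro decay, and naive term-by-term bounds on $|u_k(A,B) - u_k(I)|$ are too lossy, so the required cancellation must be extracted from the many independent coin flips between $h_i$ and $h_i + 1$ — precisely the feature that a rigid ternary odometer tower (with no ``$+1$'') would lack.
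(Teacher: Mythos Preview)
Your argument is correct and shares its core with the paper's: both reduce to levels via Lemma~3.1 and rational ergodicity (Theorem~\ref{T:wraterg}), both use the triangle inequality to reduce the shift $d$ to $d=1$, and both ultimately extract a $\mathrm{Bin}(K,1/2)$ from the Chac\'on ``$+1$'' and bound its shift-by-one total variation by $\binom{K}{\lfloor K/2\rfloor}2^{-K}=O(K^{-1/2})$. The difference is packaging. The paper conditions on the full ``type'' vector $\varepsilon\in\{-2,-1,0,1,2\}^{m-j}$ of a pair $(d,d')$, verifies directly that the multiplicity $k\mapsto\tilde\varepsilon(k)$ is a shifted binomial with $a_1+a_{-1}$ trials, and then controls the contribution of types with few $\pm1$ coordinates by an explicit multinomial estimate $d(N,m)\le 2^N(7/9)^{m-j}$. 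Your mixture/coupling formulation absorbs this bookkeeping: you condition only on which indices are ``active'' (and on the inactive increments), recover the binomial, and the small-$K$ event is disposed of by the strong law and dominated convergence rather than a separate combinatorial count. Your route is a bit more conceptual; the paper's is more explicit, but it is the same computation underneath. One minor remark: in your final sandwiching step the ratio you actually need bounded is $a_{M_m+1}(I)/a_{M_{m-1}+1}(I)\approx r_{m-1}=3$, not $M_{m+1}/M_m$ (which here is $\approx 3^c$); either bound suffices, so this does not affect the argument.
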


\begin{proof}
We prove rational weak mixing for levels. Let $j\in \mathbb N_0$,  $J = J_1$ be the bottom level of $C_j$, and let $J_2 = T^dJ_1$. As in the proof of Theorem~\ref{T:almoststeep}, it suffices to show the convergence (11). Now, for any $n$, we may choose $m$ such that $M_{m-1} \leq n-1 < M_m$. Then the quotient (11) is asymptotically bounded  above by $P_m / Q_{m-1} = 3 P_m / Q_m$, so it suffices to prove $P_m / Q_m \rightarrow 0$ as $m \rightarrow \infty$. \\
\\
By the triangle inequality,
\begin{align*}
P_m &= \displaystyle\sum_{k = -M_m}^{M_m}|\mu(J \cap T^k J) - m (J \cap T^{k+d}J)| \\
&\leq \displaystyle \sum_{\ell=0}^{d-1}\displaystyle\sum_{k = -M_m}^{M_m} |\mu(J \cap T^{k+\ell} J) - m (J \cap T^{k+1+\ell}J)|.
\end{align*}
\noindent
Since each of the $d$ outer sums on the right differs from the $\ell=0$ sum by a finite number of terms, it suffices to prove the convergence with only the $\ell= 0$ sum. That is, we wish to show
\[
\frac{\sum\limits_{k = -M_m}^{M_m}|\mu(J\cap T^kJ)-\mu(J\cap T^{k+1}J)|}{\sum\limits_{k = -M_m}^{M_m}\mu(J\cap T^kJ)} \rightarrow 0.
\]
\noindent
To do this, it will be useful to introduce some auxiliary functions. Given $(d, d') \in D(J, m) \times D(J, m)$, write
\begin{equation}
d - d' = \displaystyle \left(\sum_{i=j}^{m - 1} d_i\right) - \left(\sum_{i=j}^{m - 1} d_i'\right)
\end{equation}
with each $d_i, d_i' \in \{0, h_i, 2h_i + 1\}$. Replacing each instance of $2h_i + 1$ with $2h_i$ in (14) yields a sum of the form
\begin{equation}
\displaystyle \sum_{i = j}^{m-1} \varepsilon_i3^{ci}
\end{equation}
with each $\varepsilon_i \in \{-2, -1, 0, 1, 2\}$. This defines a function
\[
g: D(J, m) \times D(J, m) \rightarrow \{-2, -1, 0, 1, 2\}^{m - j}
\]
taking the pair $(d, d')$ to the vector $\varepsilon = (\varepsilon_i)_{i = j}^{m-1}$, with the $\varepsilon_i$ as in (15). \\
\\
For each $\varepsilon \in \{-2, -1, 0, 1, 2\}^{m - j}$, define the ``multiplicity function" $\tilde{\varepsilon}$ on $D(J, m) - D(J, m)$ by
\[
\tilde{\varepsilon}(k) = |g^{-1} (\varepsilon) \cap \{(d,d') | d,d' \in D(J,m) \text{ and } d-d'=k \}|.
\]
That is, $\tilde{\varepsilon}(k)$ counts the number of pairs $(d,d')$ in $g^{-1}(\varepsilon)$ with $d-d'=k$. Then

\begin{lemma} The following properties hold: 
\begin{enumerate}[(a)]
\item Fix $k \in D(J, m) - D(J, m)$. Then
\[
\displaystyle \sum_{\varepsilon} \tilde{\varepsilon}(k) = |D(J,m)\cap (k + D(J, m))|.
\] 
where the sum on the left is taken over all $\varepsilon \in \{-2, -1, 0, 1, 2\}^{m - j}$.
\item Fix $\varepsilon = (\varepsilon_i)_{i = j}^{m-1} \in \{-2, -1, 0, 1, 2\}^{m - j}$. For each $p \in \{-2, -1, 0, 1, 2\}$, let $a_p$ be the number of $\varepsilon_i$ equal to $p$. Then 
\[
\displaystyle \sum_k \tilde{\varepsilon}(k)=3^{a_0}2^{a_1+a_{-1}}.
\] 
where the sum on the left is taken over all $k \in D(J, m) - D(J, m)$.
\end{enumerate}
\end{lemma}
\begin{proof} For (a), simply observe that both the left and right-hand expressions count the number of pairs $(d, d') \in D(J, m) \times D(J, m)$ for which $d-d'=k$. For (b), observe that the sum on the left counts the number of pairs $(d,d')$ whose associated vector is $\varepsilon$. Now, if $\varepsilon_i = 0$ in (15), then we must have $d_i = d_i'$ in (14), and there are three ways that this can happen. Similarly, if $\varepsilon_i = 1$, then either $d_i = h_i$ and $d_i' = 0$, or $d_i = 2h_i + 1$ and $d_i' = h_i$. Proceeding in this manner, a counting argument yields the desired equality.
\end{proof}
\noindent
($\textit{Proof of Theorem~\ref{T:ratwm}, continued.}$) \\
\\
Applying Lemma 6.2 (a) and Lemma 2.1, we thus need to show
\begin{equation}
\displaystyle \frac{\sum\limits_{k = -M_m}^{M_m}\left|\sum\limits_{\varepsilon}\tilde{\varepsilon}(k)-\sum\limits_{\varepsilon}\tilde{\varepsilon}(k+1)\right|}{\sum\limits_{k = -M_m}^{M_m}\sum\limits_{\varepsilon}\tilde{\varepsilon}(k)} \rightarrow 0.
\end{equation}
\noindent
By the triangle inequality, it suffices to prove this convergence after exchanging the order of summation in both the numerator and denominator. To this end, we exhibit a nonincreasing function $c(t)$ which converges to $0$ such that
\begin{equation}
\displaystyle R(\varepsilon):=\frac{\sum\limits_{k}|\tilde{\varepsilon}(k)-\tilde{\varepsilon}(k+1)|}{\sum\limits_{k} \tilde{\varepsilon}(k)}\leq c(a_1+a_{-1})
\end{equation}
\noindent
for each $\varepsilon$. Once we have such a function, we obtain the following bound for large enough $N$:

\begin{align*}
\displaystyle \sum_{\varepsilon}\sum\limits_{k = -M_m}^{M_m}|\tilde{\varepsilon}(k)-\tilde{\varepsilon}(k+1)| &\leq \sum_{\varepsilon}\sum\limits_{k = -M_m}^{M_m} \tilde{\varepsilon}(k) c(a_1 + a_{-1}) \\
&\leq c(N) \sum_{\substack{{\varepsilon} \text{ with} \\ {a_1 + a_{-1} \geq N}}}\sum\limits_{k = -M_m}^{M_m} \tilde{\varepsilon}(k)  \\
&+ c(0)\sum_{\substack{{\varepsilon}  \text{ with} \\ {a_1 + a_{-1} < N}}}\sum\limits_{k = -M_m}^{M_m} \tilde{\varepsilon}(k).
\end{align*}
\noindent
Dividing this by the denominator of (16) yields
\[
\displaystyle \frac{\sum\limits_{\varepsilon}\sum\limits_{k = -M_m}^{M_m}|\tilde{\varepsilon}(k)-\tilde{\varepsilon}(k+1)|}{\sum\limits_{\varepsilon}\sum\limits_{k = -M_m}^{M_m}\tilde{\varepsilon}(k)} \leq c(N) + c(0) d(N,m)
\]
where
\[
\displaystyle d(N, m)  = \dfrac{\sum\limits_{\substack{{\varepsilon}  \text{ with} \\ {a_1 + a_{-1} < N}}} \sum\limits_{k = -M_m}^{M_m} \tilde{\varepsilon}(k)  }{\sum\limits_{\varepsilon}\sum\limits_{k = -M_m}^{M_m}\tilde{\varepsilon}(k)}.
\]
\noindent
We claim that $d(N, m) \rightarrow 0$ as $m \rightarrow \infty$. Combined with the fact (still to be proven) that $c(N) \rightarrow 0$ as $N \rightarrow \infty$, this will imply the convergence of (16) to zero. \\
\\
By Lemma 6.2 (b),
\begin{align*}
d(N, m) &= \left( \sum\limits_{\substack{{\varepsilon}  \text{ with} \\ {a_1 + a_{-1} < N}}} 3^{a_0}2^{a_1+a_{-1}} \right) / \left( \sum\limits_{\varepsilon}3^{a_0}2^{a_1+a_{-1}} \right) \\
&\leq 2^N \left( \sum\limits_{\varepsilon} 3^{a_0} \right) / \left( \sum\limits_{\varepsilon}3^{a_0}2^{a_1+a_{-1}} \right)
\end{align*}
with the sums taken over all $\varepsilon \in \{-2, -1, 0, 1, 2\}^{m - j}$. Now, $\sum_{p=-2}^{2} a_p = m - j$ for each such $\varepsilon$, so we can view the sums over $\varepsilon$ as sums over 5-tuples $(a_0,a_{-1},a_{1},a_{-2},a_{2})$ of non-negative integers summing to $m-j$. That is, the above expression is equal to \\
\[
2^N \left(\sum \binom{m-j}{a_0, a_{-1}, a_{1}, a_{-2}, a_{2}} 3^{a_0} \right) / \left( \sum \binom{m-j}{a_0, a_{-1}, a_{1}, a_{-2}, a_{2}} 3^{a_0}2^{a_1}2^{a_{-1}} \right)
\]
\\
\noindent
with the summation as described above and $\binom{m-j}{a_0, a_{-1}, a_{1}, a_{-2}, a_{2}}$ the multinomial coefficient ``$m-j$ choose $a_0, \cdots, a_2$". By the identity
\[
(x_1 + x_2 + x_3 + x_4 + x_5)^n = \sum_{e_1 + e_2 + e_3 + e_4 + e_5 = n} \binom{n}{e_1, e_2, e_3, e_4, e_5} x_1^{e_1} \cdots x_5^{e_5},
\]
this is equal to
\[
2^N (3 + 1 + 1 + 1 + 1)^{m-j}/(3 + 2 + 2 + 1 + 1)^{m-j}.
\]
Hence $D(N, m) \leq 2^N (7/9)^{m-j}$, which clearly goes to zero as $m \rightarrow \infty$. \\
\\
Next, we will show that 
\[
s(n):=\sup_{\varepsilon} \left\lbrace R(\varepsilon): a_1+a_{-1}=n\right\rbrace
\]
converges to $0$. (See (17) for a definition of $R(\varepsilon)$.) Setting 
\[
c(t):=\sup\left\lbrace s(n): n\geq t\right\rbrace
\]
then produces a nonincreasing function with the desired properties and completes the proof. \\
\\
\noindent
Fix $\varepsilon$, and let $a$ be the minimum element of $D(J, m) - D(J, m)$ for which $\tilde{\varepsilon}(a) > 0$. Any $k \in D(J,m) - D(J,m)$ is expressible as $k = \sum \varepsilon_i 3^{ci} + \sum (+1) + \sum (-1)$, with the $+1$'s and $-1$'s coming from choosing $2h_i + 1$ for $d_i$ and $d_i'$ in (14). We now ask: how many $+1$'s and $-1$'s do we have for $k = a$? We have only one way of obtaining $\varepsilon_i = 2$ in (14): namely, $(2h_i + 1)  - 0$. Similarly, we only have one way of obtaining $\varepsilon_i = -2$: namely, $0 - (2h_i +1)$. This introduces $a_2$ number of $+1$'s and $a_{-2}$ number of $-1$'s.  We have three ways of obtaining $\varepsilon_i = 0$, none of which introduce a net number of  $+1$'s or $-1$'s. For $\varepsilon_i = 1$, we have two possibilities: either $h_i - 0$ or $(2h_i + 1) - h_i$. Since we want to minimize $a$, we choose the former. Similarly, for $\varepsilon_i = -1$, we must have either $0 - h_i$ or $h_i - (2h_i + 1)$, and to minimize $a$ we choose the latter. It thus follows that $a$ has $a_2$ number of $+1$'s and $a_{-2} + a_{-1}$ number of $-1$'s; moreover, $\tilde{\varepsilon}(a) = 3^{a_0}$. It is then not difficult to see that
\[
\tilde{\varepsilon}(a + k) = 3^{a_0} \binom{a_1 + a_{-1}}{k}
\] 
for all $0 \leq k \leq a_1 + a_{-1}$, and is $0$ otherwise. \\
\\
Letting $n = a_1 + a_{-1}$, we thus have
\[
R(\varepsilon)=\dfrac{1}{2^n} \left(\sum_{k = 0}^{n-1} \left|\binom{n}{k}-\binom{n}{k+1}\right| + 2\right).
\]
Suppose $n = 2l -1$. (The case when $n$ is even is dealt with similarly.) Since 
\[
\left|\binom{n}{k}-\binom{n}{k-1}\right|=\binom{n + 1}{k}\left|\frac{(n + 1)-2k}{n+1}\right|,
\]
the above expression yields
\begin{align*}
R(\varepsilon) &= \dfrac{1}{2^n} \left( \sum_{k=1}^{n}\binom{n + 1}{k}\left|\frac{(n + 1)-2k}{n+1}\right| + 2\right) \\
&= \dfrac{1}{2^n} \left(\sum_{k=1}^{2l - 1} \binom{2l}{k} \left| \dfrac{l - k}{l} \right| + 2\right) \\
&\leq \dfrac{1}{2^n}  \left(2 \sum_{k=0}^{l} \binom{2l}{k} \left(\dfrac{l - k}{l} \right)\right).
\end{align*}
\noindent
Using the combinatorial identity
\[
\sum_{k=0}^{l} \binom{2l}{k} \left( \frac{l-k}{l}  \right) = \frac{l+1}{2l}\binom{2l}{l+1},
\]
we obtain
\[
R(\varepsilon) \leq \dfrac{1}{2^{2l}} \binom{2l}{l+1}.
\]
It is not difficult to see that this goes to $0$ as a function of $l$, thus proving that $T$ is rationally weakly mixing for levels. By Theorem~\ref{T:wraterg} and Lemma 3.1, it follows that $T$ is rationally weakly mixing.
\end{proof}

}

\normalsize

\bibliographystyle{plain}
\bibliography{References}

\end{document}